\numberwithin{equation}{section}
\declaretheoremstyle[numbered=no]{probs}
\declaretheorem[
	name=Theorem,numberwithin=section
	]{thm}
\declaretheorem[
	name=Lemma,
	sibling=thm,
	]{lem}
\declaretheorem[
	name=Proposition,
	sibling=thm,
	]{prop}
\declaretheorem[
	name=Corollary,
	sibling=thm,
	]{cor}
\declaretheorem[
	name=Definition,
	sibling=thm
	]{defin}
 \declaretheorem[
	name=Conjecture,
	sibling=thm
	]{conj}
\declaretheorem[name=Theorem 2.5, style=probs, sibling=thm]{Theorem2.5}
\newcommand{\defeq}{\overset{\operatorname{def}}{=}}
\newcommand{\Symm}{\mathbf{Sym}_{X_1,...,X_{n-1}}}
\newcommand{\Asymm}{\mathbf{ASym}_{X_1,...,X_{n-1}}}
\newcommand{\SymmN}{\mathbf{Sym}_{X_1,...,X_{n}}}
\newcommand{\AsymmN}{\mathbf{ASym}_{X_1,...,X_{n}}}
\newcommand{\Sign}{\mathrm{sgn}}
\newcommand{\wt}{\mathrm{Wt}}
\newcommand{\pfaf}[1]{\underset{1\leq k_2<k_1\leq #1}{\mathrm{Pf}}}
\let\oldoverrightarrow\overrightarrow
\renewcommand{\overrightarrow}[2][]{{%
  \if$#1$\else\color{#1}\fi
   \oldoverrightarrow{\color{black}#2}%
}}
\let\oldunderleftarrow\underleftarrow
\renewcommand{\underleftarrow}[2][]{{%
  \if$#1$\else\color{#1}\fi
   \oldunderleftarrow{\color{black}#2}%
}}
\title{Alternating Sign Pentagons and Magog Pentagons}
\author{Moritz Gangl}
\address{Fakultät für Mathematik, Universität Wien, Austria}
\email{moritz.gangl@univie.ac.at}
\date{March 2023}
\thanks{The author acknowledges the financial support from the Austrian Science Foundation FWF, grant P34931.}
\begin{document}

\begin{abstract}
    Alternating sign triangles have been introduced by Ayyer, Behrend and Fischer in 2016 and it was proven that there is the same number of alternating sign triangles with $n$ rows as there is of $n\times n$ alternating sign matrices. Later on Fischer gave a refined enumeration of alternating sign triangles with respect to a statistic $\rho$, having the same distribution as the unique 1 in the top row of an alternating sign matrix, by connecting alternating sign triangles to $(0,n,n)$ - Magog trapezoids. We introduce two more statistics counting the all $0$-columns on the left and right in an alternating sign triangle yielding objects we call alternating sign pentagons. We then show the equinumeracy of these alternating sign pentagons with Magog pentagons of a certain shape taking into account the statistic $\rho$. Furthermore we deduce a generating function of these alternating sign pentagons with respect to the statistic $\rho$ in terms of a Pfaffian and consider the implications of our new results on some open conjectures.
\end{abstract}
\maketitle
\section{Introduction}
An \emph{alternating sign matrix} (ASM) of order $n$, as introduced by Robbins and Rumsey (see \cite{DeterminantsAndAlternatingSignMatrices}), is an $n\times n$ matrix with entries in $\{-1,0,1\}$ such that all column- and row-sums are equal to $1$ and the non-zero entries alternate in sign along rows and columns. For example the matrix below is an alternating sign matrix of order $7$.
$$\begin{pmatrix}
    0&0&0&1&0&0&0\\
    0&1&0&-1&1&0&0\\
    1&-1&0&1&-1&1&0\\
    0&0&1&-1&0&0&1\\
    0&1&-1&1&0&0&0\\
    0&0&1&-1&1&0&0\\
    0&0&0&1&0&0&0
\end{pmatrix}$$
They conjectured that the set of ASMs of order $n$ is enumerated by the following beautiful product formula.
$$\prod_{i=0}^{n-1}\frac{(3i+1)!}{(n+i)!}$$
In 1994 Zeilberger \cite{ProofOfTheAlternatingSignMatrixConjecture} gave the first proof of this conjecture by showing that for all $n\geq 1$ the number of $n\times n$ \emph{ - Gog trapezoids}, which are in bijective correspondence with alternating sign matrices (ASMs) of order $n$, is equal to the number of $n\times n$ \emph{ - Magog trapezoids}. It relied on a result of Andrews \cite{PlanePartitionsVTheTSSCPPconjecture} enumerating the set of \emph{totally symmetric self complementary plane partitions}
(TSSCPPs) in a $2n\times 2n\times 2n$ box, which are in bijective correspondence with $n\times n$ - Magog trapezoids. However, Zeilberger was able to prove even more, he actually showed that for all $n \geq k\geq 1$, the number of $n\times k$ - Gog trapezoids is equal to the number of $n\times k$ - Magog trapezoids.

Over 20 years later, Ayyer, Behrend and Fischer \cite{ExtremeDiagonallyAndAntidiagonallySymmetricAlternatingSignMatricesOfOddOrder} introduced another class of objects enumerated by the same sequence, so called \emph{alternating sign triangles} (ASTs). In 2019 Fischer \cite{FischerEnumerationOfAlternatingSignTrianglesUsingAConstantTermApproach} came up with an alternative proof of this enumeration formula, using a constant term approach which she developed over the years \cite{TheNumberOfMonotoneTrianglesWithPrescribedBottomRow,ANewProofOfTheRefinedAlternatingSignMatrixTheorem,TheOperatorFormulaForMonotoneTriangles,RefinedEnumerationsOfAlternatingSignMatricesMonotoneDMTrapezoidsWithPrescribedTopAndBottomRow,ShortProofOfTheASMTheoremAvoidingTheSixVertexModel}. This way, she was able to relate ASTs to $n\times n$ - Magog trapezoids. In the same paper she stated a conjecture by Behrend, relating ASMs with a certain number of all zero south-east diagonals, counted from the top right corner, and all zero south-west diagonals, counted from the top left corner, to \emph{alternating sign pentagons} (ASPs), i.e. ASTs with a certain number of all zero columns on the right and on the left, including two further statistics on both sides. The above mentioned ASMs with cut off top corners are in easy bijection with a class of objects called \emph{Gog pentagons}, a generalisation of Gog trapezoids and Gog trapezoids in turn are equinumerous with Magog trapezoids as shown by Zeilberger \cite{ProofOfTheAlternatingSignMatrixConjecture}. However it is still an open problem whether there is a generalisation of the Magog trapezoids corresponding to Gog pentagons. While already in 2016 Biane and Cheballah \cite{GogAndGogampentagons} introduced a family of objects, so called \emph{GOGAm pentagons} which they conjectured to be equinumerous with the family of Gog pentagons, this is still an open problem. In this paper we will define so called \emph{Magog pentagons}, not related to the GOGAm pentagons, and give a proof similar in style to Fischers proof from 2019, relating the family of alternating sign pentagons mentioned in the conjecture by Behrend with the newly defined Magog pentagons. This shows that the newly introduced objects are a good candidate for the Gog pentagon equivalent on the Magog-side and yields an equivalent conjecture to the one formulated by Behrend. We will even be able to include a statistic on both sides for this result and as a corollary we will also prove a partial result of the conjecture by Behrend. 
\section{Prerequisites and Main Results}\label{SectionPrerequisitesAndMainResults}
In 2017 Ayyer, Behrend and Fischer introduced alternating sign triangles in \cite{ExtremeDiagonallyAndAntidiagonallySymmetricAlternatingSignMatricesOfOddOrder} as defined below.
\begin{defin}
An \emph{alternating sign triangle} (AST) of order $n$ is a triangular array of the form 
    $$\begin{matrix}
    a_{1,1}&a_{1,2}&a_{1,3}&...&...&...&a_{1,2n-1}\\
    &a_{2,2}&a_{2,3}&...&...&a_{2,2n-2}&\\
    & &...&...&...& &\\
    & & &a_{n,n}& & &
    \end{matrix}$$
with $a_{i,j}\in\{-1,0,1\}$ such that
\begin{enumerate}
    \item the non-zero entries alternate along each row and column,
    \item the row sums are equal to $1$,
    \item and the top most non-zero entry of each column is equal to $1$, if it exists.
\end{enumerate}
A $1$-\emph{column} is a column of an AST which sums up to $1$. Sticking to the notation in \cite{FischerEnumerationOfAlternatingSignTrianglesUsingAConstantTermApproach}, we further differentiate between $10$-\emph{columns} and $11$-\emph{columns}. The former are those $1$-columns whose bottom entry is $0$, whereas in the latter the bottom entry is $1$. Furthermore we define for an AST $T$ the following statistic $\rho$ by
\begin{multline}
    \rho(T)=\#(\textrm{of }11\textrm{-columns to the left of the central column})\\
    +\#(\textrm{of }10\textrm{-columns to the right of the central column})+1.
\end{multline}
\end{defin}
We are also sticking to the following notation introduced by Fischer in \cite{FischerEnumerationOfAlternatingSignTrianglesUsingAConstantTermApproach} and label the columns of the AST from left to right, starting with $0$, skipping the central column and ending with $2n-3$. See for example the labels of the columns of the AST of order $4$ in blue below.
$$\begin{matrix}
0&0&0&1&0&0&0\\
 &1&0&-1&0&1& \\
 & &1&0&0& & \\ 
 & & &1& & &\\
\textcolor{blue}{0}&\textcolor{blue}{1}&\textcolor{blue}{2}& &\textcolor{blue}{3}&\textcolor{blue}{4}&\textcolor{blue}{5}
\end{matrix}$$
Note that the central column of an AST is always a $1$-column since its bottom entry $a_{n,n}$ is equal to $1$ and there are $n$ $1$-columns in any AST of order $n$.
For example the seven ASTs of order $3$ and their $\rho$-statistic are given by 
\begin{gather*}
    \begin{matrix}1&0&0&0&0\\&1&0&0&&\rho=3\\&&1&&\end{matrix}\quad\textrm{,}\quad
    \begin{matrix}1&0&0&0&0\\&0&0&1&&\rho=2\\&&1&&\end{matrix}\quad\textrm{,}\quad \begin{matrix}0&0&0&0&1\\&0&0&1&&\rho=1\\&&1&&\end{matrix}\quad\textrm{,}\quad \\
    \begin{matrix}0&0&0&0&1\\&1&0&0&&\rho=2\\&&1&&\end{matrix}\quad\textrm{,}\quad
    \begin{matrix}0&0&0&1&0\\&1&0&0&&\rho=3\\&&1&&\end{matrix}\quad\textrm{,}\quad
    \begin{matrix}0&1&0&0&0\\&0&0&1&&\rho=1\\&&1&&\end{matrix}\quad\textrm{,}\quad \\
    \begin{matrix}0&0&1&0&0\\&1&-1&1&&\rho=2\\&&1&&\end{matrix}\quad\textrm{,}\quad
\end{gather*}
In the paper \cite{ExtremeDiagonallyAndAntidiagonallySymmetricAlternatingSignMatricesOfOddOrder} Ayyer, Behrend and Fischer showed that for any positive integer $n$ there is the same number of ASTs of order $n$ as there is of ASMs of order $n$. Similar to many other results relating equinumerous objects like TSSCPPs in a $2n\times 2n\times 2n$ box \cite{SelfComplementaryTotallySymmetricPlanePartitions} or descending plane partitions (DPPs) whose parts do not exceed $n$ \cite{AlternatingSignMatricesAndDescendingPlanePartitions} with ASMs, the proof is not bijective. Note that finding bijective proofs in this area is very hard, some say even impossible. For the reader interested in an overview on what has been tried we refer to the literature(see \cite{ASimpleExplicitBijectionBetweenN2GogAndMagogTrapezoids,GogAndMagogTrianglesAndTheSchutzenberger2LeftTrapezoids, GogAndMagogTrianglesAndTheSchutzenbergerInvolution, OnTheDoublyRefinedEnumerationOfAlternatingSignMatricesAndTotallySymmetricSelfComplementaryPlanePartitions, PlanePartitionsInTheWorkOfRichardStanleyAndHisSchool} for ASMs and TSSCCPs and \cite{TheRelationBetweenAlternatingSignMatricesAndDescendingPlanePartitionsN+3PairsOfEquivalentStatistics, ABijectiveProofOfTheASMTheoremPartI:TheOperatorFormula,ABijectiveProofOfTheASMTheoremPartII:ASMEnumerationAndASMDPPRelation,TheFirstBijectiveProofOfTheRefinedASMTheorem,TheFirstBijectiveProofOfTheAlternatingSignMatrixTheorem,AlternatingSignMatricesWithReflectiveSymmetryAndPlanePartitions:n+3PairsOfEquivalentStatistics} for ASMs and DPPs).
The main objects of interest in this paper are a generalisation of ASTs, so called alternating sign pentagons.
\begin{defin}
Let $n \in\mathbb{N}$ and $0\leq l\leq n-2< r\leq 2n-3$. An $(n,l,r)$-\emph{alternating sign pentagon} (ASP) is an AST of order $n$, where the positions $j_1,...,j_{n-1}$ of the $n-1$ non-central $1$-columns satisfy $l\leq j_1<...<j_{n-1}\leq r$.
\end{defin}
Note that then necessarily all the columns indexed by $0,...,l-1,r+1,...,2n-3$ only consist of $0$'s, so we can simply cut them off. To see this assume the contrary and consider the right most $0$-column not consisting of all $0$'s. Since it is not a $1$-column, the bottom most non-zero entry has to be $-1$, but then the row of this entry cannot sum to $1$ as there are non nonzero entries to the right of it. A similar argument works for the left side. This justifies also the name alternating sign pentagon. See below an $(6,3,8)$ - ASP $T$ with $\rho(T)=2$.
\setcounter{MaxMatrixCols}{11}
$$\begin{matrix}
     0&0&0&0&0&1&0&0&0&0&0\\
     &0&0&0&1&-1&0&0&0&1&\\
     &&0&0&0&1&0&0&0&&\\
     &&&1&-1&0&0&1& &&\\
     &&& &1&-1&1& &&&\\
     && & & &1& & & & 
\end{matrix}
\begin{matrix}
     &&0&0&1&0&0&0&0\\
     &&0&1&-1&0&0&0&1\\
     &\leftrightarrow&0&0&1&0&0&0&\\
     &&1&-1&0&0&1& &&\\
     && &1&-1&1& &&&\\
     & & & &1& & & &  
\end{matrix}$$

These objects were already hinted at in \cite{FischerEnumerationOfAlternatingSignTrianglesUsingAConstantTermApproach}.
Our main result relates the $(n,l,r)$ - ASPs to combinatorial objects related to TSSCCPs, making them fit into the bigger picture connecting ASMs, ASTs, TSSCCPs, DPPs. It is well known (see for example \cite{PlanePartitionsInTheWorkOfRichardStanleyAndHisSchool,ProofOfTheAlternatingSignMatrixConjecture}) that TSSCCPs correspond bijectively to $(0,n,n)$ - Magog trapezoids as defined below, where we borrow the terminology from \cite{ProofOfTheAlternatingSignMatrixConjecture}.
\begin{defin}
    An $(m,n,k)$-\emph{Magog trapezoid} is an array of positive integers consisting of the top $k$ rows of an array
    $$\begin{matrix}
        a_{1,1}&a_{2,1}&...&...&a_{n,1}\\
        a_{2,2}&a_{3,2}&...&a_{n,2}&\\
        ...&...&...& &\\
        a_{n-1,n-1}&a_{n,n-1}& & &\\
        a_{n,n}& & & &
    \end{matrix}$$
such that entries along rows are weakly increasing, entries along columns are weakly decreasing, and and such that the entries in the first row are bounded by $a_{k,1}\leq m+k$. 
\end{defin}
We now define a new object generalising the $(m,n,k)$ - Magog trapezoids from above.
\begin{defin}
    An $(m,n,k,l)$-\emph{Magog pentagon} consists of the first $l$ south-east diagonals, where we count the diagonals from top right to bottom left, of an $(m,n,k)$ - Magog trapezoid. In other words, it is an array of positive integers consisting of the intersection of the top $k$ rows and the first $l$ south-east diagonals, where we count the diagonals from top right to bottom left, of an array
    $$\begin{matrix}
         a_{1,1}&a_{2,1}&...&...&a_{n,1}\\
        a_{2,2}&a_{3,2}&...&a_{n,2}&\\
        ...&...&...& &\\
        a_{n-1,n-1}&a_{n,n-1}& & &\\
        a_{n,n}& & & &
    \end{matrix}$$
such that entries along rows are weakly increasing, entries along columns are weakly decreasing, and such that the entries in the first row are bounded by $a_{k,1}\leq m+k$. We define a statistic $\tau$ on $(m,n,k,l)$ - Magog pentagons $P$ by \begin{equation}
    \tau(P)=n+\sum_{i=1}^k(a_{n-1,i}-a_{n,i}).
\end{equation}
\end{defin}
Note that since $(m,n,k,2n-1)$ - Magog pentagons are simply $(m,n,k)$ - Magog trapezoids this statistic is also defined on Magog trapezoids. This statistic was introduced by Mills, Robbins and Rumsey on Magog triangles \cite{SelfComplementaryTotallySymmetricPlanePartitions}. 

See for example below the $(0,10,4,11)$ - Magog pentagon with weight $\tau(P)=5$.
$$\begin{matrix}
1&2&2&4&5&6&7&7&8&9\\
1&2&2&4&5&5&5&5&7\\
&2&2&4&4&4&4&5\\
& &2&2&2&2&3\\
\end{matrix}$$

Our main result explains the connection between weighted $(n,l,r)$ - ASPs and certain weighted Magog pentagons, generalising the relation of ASTs and $(0,n,n)$ - Magog trapezoids proven by Fischer in \cite{FischerEnumerationOfAlternatingSignTrianglesUsingAConstantTermApproach}.
\begin{thm}\label{RelationASTsMagogPentagons}
Let $n\in\mathbb{N}$, $1\leq p\leq n$ and $0\leq l\leq n-2< r\leq 2n-3$, such that $l+r<2n-2$ and $r-l>n-3$. The set of $(n,l,r)$ - ASPs $P$ with $\rho(P)=p$ and the set of $(n,2n-3-r,2n-3-l)$ - ASPs $T$ with $\rho(T)=n+1-p$ are equinumerous with the set of $(0,n,r+2-n,r-l)$ - Magog pentagons $M$ with $\tau(M)=p$.
\end{thm}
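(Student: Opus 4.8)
The plan is to split the three-way equinumeracy into two essentially independent statements: a straightforward reflection symmetry between the two families of alternating sign pentagons, and a Fischer-style constant term identity relating alternating sign pentagons to Magog pentagons. Since the two ASP families occur with complementary parameters, the reflection reduces the theorem to a single ASP--Magog comparison.

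First I would treat the reflection. Reflecting an AST horizontally about its central axis sends the column labelled $j$ to the column labelled $2n-3-j$ and fixes the central column, so it carries an $(n,l,r)$-ASP to an $(n,2n-3-r,2n-3-l)$-ASP; the hypotheses $l\le n-2<r$ guarantee that the reflected shape is again admissible (indeed $2n-3-r\le n-2<2n-3-l$). Since the reflection repositions each column without reversing it vertically, it preserves row sums, the alternating conditions, and the requirement that the topmost nonzero entry of a column be $1$, so it is a genuine AST operation; moreover it preserves whether a $1$-column is a $10$- or a $11$-column while exchanging the two sides of the central column. Writing $a,b,c,d$ for the numbers of left $11$-, left $10$-, right $11$-, right $10$-columns, we have $a+b+c+d=n-1$ and $\rho=a+d+1$, so after reflection the statistic becomes $c+b+1=(n-1)-(a+d)+1=n+1-\rho$. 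Hence reflection is a bijection from $(n,l,r)$-ASPs with $\rho=p$ to $(n,2n-3-r,2n-3-l)$-ASPs with $\rho=n+1-p$, establishing the first equinumeracy and reducing the theorem to comparing one ASP family with the $(0,n,r+2-n,r-l)$-Magog pentagons.

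For the ASP--Magog comparison I would follow the constant term method of Fischer's $2019$ enumeration of ASTs, extended to track the two cut-off parameters. The steps are: (i) encode an $(n,l,r)$-ASP by its column data, namely the positions of the $n-1$ non-central $1$-columns inside $[l,r]$, their $10$/$11$-type, and the intervening $-1$-positions forced by the alternating condition, and express $\sum_P x^{\rho(P)}$ as the constant term of an explicit Laurent-polynomial product, in which the restriction of the $1$-column positions to $[l,r]$ enters as modified summation bounds; (ii) carry out the analogous computation on the Magog side, writing $\sum_M x^{\tau(M)}$ over $(0,n,k,l')$-Magog pentagons with $k=r+2-n$ and $l'=r-l$ as a constant term, where truncating to the top $k$ rows and to the first $l'$ south-east diagonals again appears as explicit bounds; (iii) show that under the dictionary $k=r+2-n$, $l'=r-l$ the two constant term expressions agree coefficientwise in $x$, so that the number of ASPs with $\rho=p$ equals the number of Magog pentagons with $\tau=p$. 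The conditions $l+r<2n-2$ and $r-l>n-3$ serve to keep all three shapes in the admissible, non-degenerate range throughout this computation.

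The hard part will be step (iii): producing the two constant term formulas in a shape where they can be matched, and in particular verifying that the left cut-off $l$ on the ASP side corresponds exactly to the south-east diagonal truncation $l'=r-l$ on the Magog side, while the statistics $\rho$ and $\tau$ line up term by term in $p$. I expect this to require the same bound-shifting in the constant term and the same monotone-triangle/operator identities that drive the original trapezoid case, the genuinely new input being the bookkeeping for the second, left-hand truncation, which has no counterpart in Fischer's AST--Magog-trapezoid result. Once the two generating functions are identified, comparing coefficients of $x^p$ yields the stated refined equinumeracy, and, as indicated in the abstract, the resulting common generating function can be repackaged as a Pfaffian.
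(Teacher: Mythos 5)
Your first half is fine: the reflection argument, including the bookkeeping $\rho(T)+\rho(T')=n+1$, is exactly the paper's Lemma \ref{LemmaReflectionPropOfASTs}, and it correctly reduces the theorem to the single comparison between $(n,l,r)$-ASPs and $(0,n,r+2-n,r-l)$-Magog pentagons.

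The second half, however, has a genuine gap: steps (ii) and (iii) are a restatement of what must be proven, not a proof, and the proposed mechanism is not one that can be carried out as described. You assume that $\sum_M t^{\tau(M)}$ over Magog pentagons can itself be written as a constant term of an explicit Laurent polynomial, with the truncations appearing ``as explicit bounds,'' and that the two constant terms can then be matched coefficientwise. But no such constant-term formula for the Magog side exists (neither here nor in Zeilberger's or Fischer's work); the Magog side is naturally handled by nonintersecting lattice paths and determinants, not by an operator/constant-term formula, and this asymmetry is precisely why the known proofs are one-directional. What the paper actually does is: start from the ASP constant term (Corollary \ref{UnevaluatedCTOfLRRestrictedASTSCor}, a direct consequence of Fischer's Theorem \ref{FischerTheorem2}); evaluate the constant term in $X_1,\dots,X_{n-1}$ using the symmetrizer trick, the bounded Littlewood identity of Lemma \ref{BoundedLittlewoodIdentity}, and Zeilberger's Littlewood identity, arriving at the explicit sum of binomial determinants of Theorem \ref{ThmConstantTermExpressionForLRRestrictedASTS}; then interpret that sum via Lindstr\"om--Gessel--Viennot together with a double application of the reflection principle as the generating function of nonintersecting lattice paths confined above the line $y=x+l-r-2n+1$ (Proposition \ref{PropRelatingSumOfDetsAndLatticePaths}), with $t$ marking paths ending in a north step; and finally give a weight-preserving bijection, through Gelfand--Tsetlin patterns, from these path tuples to $(0,n,r+2-n,r-l)$-Magog pentagons, under which the end-point range produces the $r+2-n$ rows, the diagonal wall produces the $r-l$ diagonals, and the north-step weight becomes $\tau$.

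In particular, the two points you yourself flag as ``the hard part'' --- that the left cut-off $l$ turns into the south-east-diagonal truncation $r-l$, and that $\rho$ and $\tau$ line up term by term --- are exactly the content carried by the reflection-principle wall and by the $t$-weight on final north steps in the paper's argument. Since your proposal supplies no mechanism for either (and the mechanism it does suggest, a Magog-side constant term, is not available), the central equinumeracy remains unproved in your write-up.
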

Consider for example for $n=3$, $l=0$, $r=2$ the five $(3,0,2)$ - ASPs, $(3,1,3)$ - ASPs and their $\rho$-statistic and the five  $(0,3,1,2)$ - Magog pentagons with their $\tau$-statistic below.
\begin{gather*}
    \begin{matrix}1&0&0&0&\\&1&0&0&\\&&1&&\\&&\rho=3&&\end{matrix}\quad\textrm{,}\quad
    \begin{matrix}0&0&0&1&\\&1&0&0&\\&&1&&\\&&\rho=3&&\end{matrix}\quad\textrm{,}\quad
    \begin{matrix}0&0&1&0&\\&1&-1&1&\\&&1&&\\&&\rho=2&&\end{matrix}\quad\textrm{,}\quad \\
    \begin{matrix}1&0&0&0&\\&0&0&1&\\&&1&&\\&&\rho=2&&\end{matrix}\quad\textrm{,}\quad
    \begin{matrix}0&1&0&0&\\&0&0&1&\\&&1&&\\&&\rho=1&&\end{matrix}\quad\textrm{,}\quad\\
    \begin{matrix}&0&0&0&1\\&0&0&1&\\&&1&&\\&&\rho=1&&\end{matrix}\quad\textrm{,}\quad
    \begin{matrix}&1&0&0&0\\&0&0&1&\\&&1&&\\&&\rho=1&&\end{matrix}\quad\textrm{,}\quad
    \begin{matrix}&0&0&0&1\\&1&0&0&\\&&1&&\\&&\rho=2&&\end{matrix}\quad\textrm{,}\quad\\
    \begin{matrix}&0&1&0&0\\&1&-1&1&\\&&1&&\\&&\rho=2&&\end{matrix}\quad\textrm{,}\quad
    \begin{matrix}&0&0&1&0\\&1&0&0&\\&&1&&\\&&\rho=3&&\end{matrix}\quad\textrm{,}\quad\\
    \begin{matrix}& & 1 &1 & \\& & \tau=3& &\end{matrix}\quad\textrm{,}\quad
    \begin{matrix}& & 2 &2 & \\& & \tau=3& &\end{matrix}\quad\textrm{,}\quad
    \begin{matrix}& & 2 &3 & \\& & \tau=2& &\end{matrix}\\
    \begin{matrix}& & 1 &2 & \\& & \tau=2& &\end{matrix}\quad\textrm{,}\quad
    \begin{matrix}& & 1 &3 & \\& & \tau=1& &\end{matrix}\quad\textrm{.}\quad
\end{gather*}
Although we have some restriction on $l$ and $r$ in Theorem \ref{RelationASTsMagogPentagons} this still exhausts the full class of ASPs due to two simple reasons of symmetry explained in the following two lemmata.
\begin{lem}\label{LemmaReflectionPropOfASTs}
Let $n\in\mathbb{N}$, $1\leq p\leq n$ and $0\leq l\leq n-2<r\leq 2n-3$,  such that $l+r<2n-2$. The number of $(n,l,r)$ - ASPs $P$ with $\rho(P)=p$ is the same as the number of $(n,2n-3-r,2n-3-l)$ - ASPs $T$ with $\rho(T)=n+1-p$.
\end{lem}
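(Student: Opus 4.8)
The plan is to exhibit an explicit reflection bijection on the level of the triangular arrays themselves, rather than to argue through any intermediate object. Given an $(n,l,r)$ - ASP $P$, I would map it to the array $T$ obtained by reflecting $P$ left-to-right about its central column. Concretely, if $P$ has entries $a_{i,j}$, then $T$ has entries $a'_{i,j}$ defined by swapping the column indexed $k$ (in the $0,\dots,2n-3$ labelling that skips the central column) with the column indexed $2n-3-k$, while leaving the central column fixed. The first thing to check is that this reflected array is again a valid AST: the alternating condition along rows and columns, the row-sum condition, and the ``topmost nonzero entry is $1$'' condition are all manifestly symmetric under a horizontal flip, since reversing the order of entries in each row preserves the property of alternating in sign and preserves row sums, and the column conditions are read off columnwise and merely permuted. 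So $T$ is an AST of order $n$, and the map is clearly an involution, hence a bijection.

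Next I would track what the reflection does to the positions of the noncentral $1$-columns and to the statistic $\rho$. If the noncentral $1$-columns of $P$ sit at positions $l \le j_1 < \dots < j_{n-1} \le r$, then after reflection they sit at positions $2n-3-j_{n-1} < \dots < 2n-3-j_1$, so the new positions lie in the interval $[2n-3-r,\, 2n-3-l]$; this is exactly the statement that $T$ is an $(n,2n-3-r,2n-3-l)$ - ASP. I would verify here that the hypothesis $l+r<2n-2$ guarantees $0 \le 2n-3-r \le n-2 < 2n-3-l \le 2n-3$, so that the reflected parameters satisfy the defining inequalities for an ASP and the statement is well-posed.

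The one genuinely substantive point — and the step I expect to be the main obstacle — is that the reflection must send $\rho(P)=p$ to $\rho(T)=n+1-p$, because $\rho$ is \emph{not} itself left-right symmetric: it counts $11$-columns on the left but $10$-columns on the right. The key observation is that reflection interchanges ``left of center'' with ``right of center,'' so a $11$-column to the left of center in $P$ becomes a $11$-column to the right of center in $T$, and vice versa; the $10$-columns behave the same way. Thus $\rho$ is not preserved, and I must instead relate the left-$11$ and right-$10$ counts of $P$ to the left-$11$ and right-$10$ counts of $T$ via a conservation identity. Writing $\ell_{11},\ell_{10},r_{11},r_{10}$ for the numbers of $11$- and $10$-columns strictly left and strictly right of center in $P$, the definition gives $\rho(P)=\ell_{11}+r_{10}+1$, while $\rho(T)=r_{11}+\ell_{10}+1$ since the left/right roles swap under reflection. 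To finish I would use that among the $n-1$ noncentral $1$-columns each is exactly one of $11$ or $10$, so $\ell_{11}+\ell_{10}+r_{11}+r_{10}=n-1$, whence $\rho(P)+\rho(T)=(\ell_{11}+r_{10})+(r_{11}+\ell_{10})+2=(n-1)+2=n+1$, giving $\rho(T)=n+1-\rho(P)=n+1-p$ as required. This identity — that the total count of noncentral $1$-columns is $n-1$ and that reflection cleanly swaps the two ``flavored'' tallies — is the crux, and once it is in place the bijection delivers the claimed equinumeracy.
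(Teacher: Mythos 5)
Your proof is correct and takes essentially the same approach as the paper: the left--right reflection about the central column, the observation that it carries $(n,l,r)$ - ASPs to $(n,2n-3-r,2n-3-l)$ - ASPs, and the conservation identity $\rho(P)+\rho(T)=n+1$, which the paper phrases as ``the central column is counted twice and all other $1$-columns exactly once.'' Your explicit bookkeeping with the four tallies $\ell_{11},\ell_{10},r_{11},r_{10}$ just spells out what the paper leaves implicit.
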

\begin{proof}
Note that ASTs are symmetric with respect to reflection along the central column. This reflection turns $(n,l,r)$ - ASPs into $(n,2n-3-r,2n-3-l)$ - ASPs and if $l+r-2n+2<0$, then $$(2n-3-r)+(2n-3-l)-2n+2=2n-4-l-r=-(l+r-2n+3)-1\geq -1.$$ So really all possible ASPs of order $n$ are in bijction with some class in the first family. It remains to check the effect of the reflection on the $\rho$-statistic. Since columns on the left of the central column turn into columns on the right of the central column by reflection and vice versa, we have that $$n+1=\rho(T)+\rho(T'),$$
for any AST $T$, where $T'$ denotes the reflection of $T$ along its central column, since the central column is counted twice and all other $1$-columns are counted exactly once.
\end{proof}
Note that if in the above lemma one has $l+r=2n-3$, then $l=2n-3-r$ and also $(2n-3-r)+(2n-3-l)-2n+2=-1$. Such $(n,l,r)$ - ASPs are clearly symmetric with respect to reflection along the central column and are special, since the classes of $(n,l,r)$ - ASPs $P$ and order $n$ with $\rho(P)=p$ and those $(n,l,r)$ - ASPs $T$ and order $n$ with $\rho(T)=n+1-p$ are equinumerous. See for example the three $(3,1,2)$ - ASPs and their $\rho$-statistics below.
\begin{gather*}
    \begin{matrix}0&0&1\\1&0&0&\\&1&\\&\rho=3&\end{matrix}\quad\textrm{,}\quad
    \begin{matrix}0&1&0\\1&-1&1&\\&1&\\&\rho=2&\end{matrix}\quad\textrm{,}\quad 
    \begin{matrix}1&0&0\\0&0&1&\\&1&\\&\rho=1&\end{matrix}
\end{gather*}
The second lemma explains, why we can assume the other restriction on $l$ and $r$ in Theorem \ref{RelationASTsMagogPentagons}.
\begin{lem}\label{TechnicalConditions}
Let $n\in\mathbb{N}$ and $0\leq l\leq n-2<r\leq 2n-3$, such that $l+r<2n-3$. If $r-l\leq n-3$ there do not exist any $(n,l,r)$ - ASPs. 
\end{lem}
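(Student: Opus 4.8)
The plan is to prove this by a straightforward counting (pigeonhole) argument comparing the number of column positions available to a $(n,l,r)$ - ASP with the number of non-central $1$-columns that any such array must contain.

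First I would recall two facts already recorded in the excerpt: every AST of order $n$ has exactly $n$ $1$-columns, and one of them is always the central column (its bottom entry $a_{n,n}=1$). Consequently an AST of order $n$ possesses exactly $n-1$ \emph{non-central} $1$-columns. By the definition of an $(n,l,r)$ - ASP, the labels $j_1<\dots<j_{n-1}$ of these $n-1$ non-central $1$-columns must satisfy $l\leq j_1<\dots<j_{n-1}\leq r$, so in particular they form a set of $n-1$ \emph{distinct} labels drawn from $\{l,l+1,\dots,r\}$.

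Next I would observe that, since the columns are labelled by consecutive integers $0,1,\dots,2n-3$ with only the central column omitted from the numbering, the set $\{l,l+1,\dots,r\}$ contains exactly $r-l+1$ admissible positions. Accommodating $n-1$ strictly increasing labels among these forces $r-l+1\geq n-1$, that is $r-l\geq n-2$. Taking the contrapositive, if $r-l\leq n-3$ then no valid placement of the $n-1$ non-central $1$-columns exists, hence there is no $(n,l,r)$ - ASP, which is precisely the assertion.

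Since the argument is purely a comparison of cardinalities, there is no genuine obstacle; the only point needing a moment of care is the bookkeeping of the labelling, namely that the labels run through \emph{consecutive} integers (the central column being skipped only in the numbering, not leaving a gap in the integer sequence of labels), so that the interval $\{l,\dots,r\}$ really has $r-l+1$ elements. I note in passing that the extra hypothesis $l+r<2n-3$ plays no role in this argument; it is carried along only to keep the statement within the regime of the surrounding results.
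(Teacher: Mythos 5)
Your proof is correct and is essentially the same pigeonhole argument as the paper's: you compare the $r-l+1$ admissible non-central positions against the $n-1$ non-central $1$-columns, while the paper equivalently compares the $r-l+2$ total columns (central one included) against the $n$ $1$-columns that any AST of order $n$ must have. Your observation that the hypothesis $l+r<2n-3$ is not used also matches the paper, whose proof likewise never invokes it.
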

\begin{proof}
This follows from the fact, that if $r-(l-1)\leq n-2$, there can not exist any $(n,l,r)$ - ASPs, since then there are at most $r-(l-1)+1\leq n-1$ columns and so in particular at most $r-(l-1)+1\leq n-1$ $1$-columns, which is impossible as any such object is an AST of order $n$ and hence has to have exactly $n$ $1$-columns.
\end{proof}

We also obtain a formula for the generating function of these newly defined objects in terms of a Pfaffian.
\begin{thm}\label{PfaffianCor}
Let $n\in\mathbb{N}$, $1\leq p\leq n$ and $0\leq l\leq n-2< r\leq 2n-3$, such that $l+r<2n-2$ and $r-l>n-3$.  The generating function for the set of $(0,n,r+2-n,r-l)$ - Magog pentagons $P$ with respect to $\tau$ is given by
\begin{equation}
    t\times \pfaf{n-1}\Bigg(\sum_{1\leq e_1<e_2\leq r+1}\det_{1\leq i,j\leq 2}\Big(\scalebox{0.9}{$t\binom{k_j-1}{e_i-k_j}-t\binom{k_j-1}{r-e_i-l+2n-1-k_j}+\binom{k_j-1}{e_i-1-k_j}-\binom{k_j-1}{r-e_i-l+2n-k_j}$}\Big)\Bigg),
\end{equation}
if $n$ is odd. If $n$ is even, we have to add an $n$-th column to the pfaffian with entries $a_{j,n}=\sum_{1\leq e_1\leq r+1}t\binom{j-1}{e_1-j}-t\binom{j-1}{r-e_1-l+2n-1-j}+\binom{j-1}{e_1-1-j}-\binom{j-1}{r-e_1-l+2n-j}$ for $j\in[n]$.
\end{thm}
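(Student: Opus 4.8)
The plan is to reduce the weighted enumeration to a single summed determinant and then to recognise that sum as a Pfaffian by de Bruijn's formula. First I would realise the $(0,n,r+2-n,r-l)$ - Magog pentagons as families of $n-1$ non-intersecting lattice paths: the monotonicity of rows and columns is the standard Lindstr\"om--Gessel--Viennot encoding of such triangular arrays, with fixed starting points indexed by $k_1,\dots,k_{n-1}$ and free terminal points $e_1<\dots<e_{n-1}\in\{1,\dots,r+1\}$, the starting positions and the upper bound $r+1$ being read off from the shape parameters $k=r+2-n$ and $r-l$. The positivity of the entries together with the bound $a_{k,1}\le m+k$ (here $m=0$) confines the paths between two walls.

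Under this correspondence the statistic $\tau(P)=n+\sum_{i=1}^{k}(a_{n-1,i}-a_{n,i})$, which only records the gap between the bottom two rows, becomes a multiplicative weight on the paths. This is what produces the $t$-linear shape of the entries: applying the reflection principle to a path confined by the upper wall turns each single-path weight into a difference of two binomial coefficients, the ``reflected'' term carrying the argument $r-e_i-l+2n-1-k_j$ (respectively $r-e_i-l+2n-k_j$), while the weighting of the relevant final step splits each such difference into a $t$-part and a $t$-free part. Collecting these, the Lindstr\"om--Gessel--Viennot lemma gives, for fixed terminal points, the determinant $\det_{1\le i,j\le n-1}\big(g(e_i,k_j)\big)$ whose $(i,j)$ entry $g(e_i,k_j)$ is the bracketed expression in the statement, and summing over all admissible terminal tuples yields
\[
t\sum_{1\le e_1<\dots<e_{n-1}\le r+1}\ \det_{1\le i,j\le n-1}\big(g(e_i,k_j)\big),
\]
where the global factor $t$ and the per-path weights together reconstruct $\tau$.

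It remains to convert this sum of determinants into the claimed Pfaffian. For this I would invoke de Bruijn's formula (equivalently the Pfaffian version of the Lindstr\"om--Gessel--Viennot lemma): for an even number of functions one has $\sum_{e_1<\dots<e_{2m}}\det\big(g(e_i,k_j)\big)=\pfaf{2m}\big(\sum_{e_1<e_2}\det_{1\le i,j\le 2}(g(e_i,k_j))\big)$, which is exactly the stated Pfaffian when $n$ is odd, so that $n-1$ is even. When $n$ is even, $n-1$ is odd and the antisymmetric matrix has odd size, so its Pfaffian vanishes; here I would apply the standard bordering trick, adjoining an $n$-th row and column whose entries $a_{j,n}=\sum_{1\le e_1\le r+1}g(e_1,j)$ record one additional, unconstrained path. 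This restores an even-sized antisymmetric matrix and reproduces precisely the extra column in the statement.

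The routine part of the argument is the de Bruijn step, a black-box application once the summed-determinant form is in hand. The main obstacle is the first half: setting up the lattice-path model with the correct walls, carrying out the reflection-principle computation so as to obtain exactly the binomial arguments and their shifts, and verifying that $\tau$ transforms into the weight responsible for the $t$-linear entries and the global factor $t$. The hypotheses $l+r<2n-2$ and $r-l>n-3$ of Theorem \ref{RelationASTsMagogPentagons} enter precisely here, guaranteeing that the shape parameters $r+2-n$ and $r-l$ give an admissible, non-empty family of Magog pentagons and hence a well-defined path model.
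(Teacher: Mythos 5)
Your proposal is correct, and its engine coincides with the paper's: a weighted non-intersecting lattice-path model whose single-path counts are evaluated by the reflection principle against a wall, followed by Pfaffianization of the resulting sum of determinants, with the bordered matrix handling the odd-size case $n-1$ odd. The genuine difference is in how the determinant sum gets identified with the $\tau$-generating function of the Magog pentagons. The paper's proof of Theorem \ref{PfaffianCor} does not re-derive this on the Magog side at all: it quotes Theorem \ref{ThmConstantTermExpressionForLRRestrictedASTS} (proved by constant-term and symmetrizer manipulations plus the bounded Littlewood identity, entirely on the AST side) together with Theorem \ref{RelationASTsMagogPentagons}, so that $t$ times the determinant sum is already known to equal the desired generating function; the only new work in its proof is converting that sum into a Pfaffian, done by reading the sum as a path generating function (Proposition \ref{PropRelatingSumOfDetsAndLatticePaths}) and invoking Stembridge's theorem — the combinatorial counterpart of your de Bruijn identity, including the same phantom-vertex/bordering device for even $n$. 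You instead biject Magog pentagons to paths directly and recover the entries by reflection, bypassing the AST and constant-term machinery entirely; this is legitimate and makes the theorem self-contained, but the step you defer as the ``main obstacle'' is precisely the content of Proposition \ref{PropRelatingSumOfDetsAndLatticePaths} and of the Gelfand-Tsetlin-pattern bijection in the paper's Section 4, run in the reverse direction. Two details to repair when writing it out: in the paper's normalization there is only \emph{one} reflecting wall, $y=x+l-r-2n+1$, coming from the pentagon truncation parameter $r-l$, while the trapezoid parameter $r+2-n$ manifests as the bound $e_i\le r+1$ on the free endpoints and the positivity and $a_{k,1}\le k$ constraints are absorbed into the start/end-point geometry (so your ``two walls'' should collapse to one); and the weight bookkeeping requires checking both that $\tau(M)-1$ equals the number of paths ending in a north step and that the LGV involution is compatible with the wall, which holds because a non-intersecting tuple stays weakly above the wall if and only if its lowest path does.
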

To conclude the proof of our main result Theorem \ref{RelationASTsMagogPentagons} in the fourth section, we need  to introduce another class of objects closely related to Magog trapezoids and Magog pentagons, so called Gelfand-Tsetlin-patterns.
\begin{defin}\label{DefAndBijGelfandTsetlinPattern}
Let $n\in\mathbb{N}$. A \emph{Gelfand-Tsetlin pattern} of order $n$ is an array of positive integers of the following form
    $$\begin{matrix}
     & & & &a_{1,1}& & & &\\
     & & &a_{2,2}& &a_{2,1}& & &\\
     & &...& &...& &...& &\\
     &a_{n-1,n-1}& &...& &...& &a_{n-1,1}&\\
     a_{n,n}& &a_{n,n-1}& &...& &a_{n,2}& &a_{n,1}
    \end{matrix}$$
such that along all north-east and south-east diagonals we have a weak increase. We label the $n$ south-east diagonals and the $2n-1$ columns of the Gelfand-Tsetlin-pattern of order $n$ from right to left.
\end{defin}
Throughout the paper we will always assume that $a_{k,1}\leq k$ for all $k\in [n]$ for all Gelfand-Tsetlin-patterns of order $n$.
\section{An intermediate result}\label{SectionIntermediateResult}
As mentioned in the introduction, the proof of Theorem \ref{RelationASTsMagogPentagons} makes use of a constant-term approach developed by Fischer in \cite{TheNumberOfMonotoneTrianglesWithPrescribedBottomRow,ANewProofOfTheRefinedAlternatingSignMatrixTheorem,TheOperatorFormulaForMonotoneTriangles,RefinedEnumerationsOfAlternatingSignMatricesMonotoneDMTrapezoidsWithPrescribedTopAndBottomRow,ShortProofOfTheASMTheoremAvoidingTheSixVertexModel} and used by her in 2019 to give another proof of the equinumeracy of ASTs of order $n$ and TSSCCPs in an $2n\times 2n\times 2n$ box in \cite{FischerEnumerationOfAlternatingSignTrianglesUsingAConstantTermApproach}. In the same paper she was able to prove a conjecture formulated in \cite{ExtremeDiagonallyAndAntidiagonallySymmetricAlternatingSignMatricesOfOddOrder}, relating the $\rho$ statistic on the AST side with the position of the unique $1$ in the top row of an ASM. We state \cite[Theorem 2]{FischerEnumerationOfAlternatingSignTrianglesUsingAConstantTermApproach}, one of the main results of Fischer's paper, formulated in a way which we will make use of.
\begin{thm}[\cite{FischerEnumerationOfAlternatingSignTrianglesUsingAConstantTermApproach}, Theorem 2]\label{FischerTheorem2}
Let $n\in\mathbb{N}$ and $1\leq p\leq n$. The number of ASTs $T$ of order $n$ with $\rho(T)=p$ and non central $1$-columns in positions $0\leq j_1<j_2<...j_{n-1}\leq 2n-3$ is given by the constant term with respect to $X_1,...,X_{n-1},t$ of
\begin{equation}\label{ConstantTermNumberOfJ1...JN-1ASTS}
    t^{-p+1}\prod_{i=1}^{n-1}(t+X_i)X_i^{-j_i}\prod_{1\leq i<j\leq n-1}(1+X_i+X_iX_j)(X_j-X_i).
\end{equation}
\end{thm}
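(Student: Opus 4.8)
The plan is to derive the formula from the constant-term machinery Fischer develops in the papers cited above, by first reformulating an AST column-by-column and then isolating both the central column and the statistic $\rho$. Concretely, I would replace each column by its sequence of partial sums read from the top: since the entries of a column alternate in sign with leading nonzero entry $+1$, these partial sums lie in $\{0,1\}$, so the alternating condition becomes a monotonicity/lattice-path condition, while the requirement that every row sum to $1$ becomes a boundary-and-non-crossing condition coupling the columns. Under this encoding a $1$-column is distinguished from a $0$-column, the central column is always a $1$-column, and specifying the positions $0\le j_1<\dots<j_{n-1}\le 2n-3$ of the non-central $1$-columns fixes the boundary data of the resulting configuration. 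The aim is then to count these configurations with prescribed boundary, while weighting by $\rho$.

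Next I would invoke Fischer's operator/constant-term formula for triangular arrays with prescribed boundary data. Applied to the $n-1$ active (non-central) columns, this expresses the number of admissible configurations with boundary $(j_1,\dots,j_{n-1})$ as the constant term in $X_1,\dots,X_{n-1}$ of $\prod_{i=1}^{n-1}X_i^{-j_i}$ against a pairwise interaction kernel. The per-column factor $X_i^{-j_i}$ records the position of the $i$-th $1$-column, while the monotonicity and non-crossing constraints inherited from the alternating and row-sum conditions are exactly what produce the kernel $\prod_{1\le i<j\le n-1}(1+X_i+X_iX_j)(X_j-X_i)$: the Vandermonde factor $(X_j-X_i)$ is the usual antisymmetric signature of non-intersecting configurations, and the quadratic factor $(1+X_i+X_iX_j)$ is the image, under the generating-function transform underlying Fischer's method, of the local shift/difference operator that governs adjacent columns.

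Finally I would incorporate $\rho$. Writing $\prod_{i=1}^{n-1}(t+X_i)X_i^{-j_i}=\prod_i\big(tX_i^{-j_i}+X_i^{1-j_i}\big)$ and expanding, the factor $(t+X_i)$ records, for each non-central $1$-column, a binary choice between leaving its position index at $j_i$ (marked by $t$) or shifting it down by one; the prefactor $t^{-p+1}$ together with taking the constant term in $t$ retains precisely the terms in which $p-1$ of the columns are marked. I would then verify that this marking matches the definition of $\rho$ — that a $11$-column left of the centre and a $10$-column right of the centre are exactly the marked ones, the central column supplying the additive $+1$ — so that the retained terms enumerate ASTs with $\rho=p$. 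As a sanity check, for $n=2$ the pairwise kernel is empty and the expression reduces to $\mathrm{CT}_{X_1,t}[t^{-p+1}(tX_1^{-j_1}+X_1^{1-j_1})]$, which recovers the two ASTs of order $2$, with $\rho=2$ at $j_1=0$ and $\rho=1$ at $j_1=1$. The main obstacle is the middle step together with this last matching: rigorously producing the kernel $(1+X_i+X_iX_j)$ from the alternating/row-sum structure while simultaneously threading the $\rho$-refinement through the correspondence, since the per-column marking interacts with both the underlying non-crossing skeleton and the left/right-of-centre split, and the sign bookkeeping forced by the $-1$ entries is delicate.
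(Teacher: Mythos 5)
First, a point of reference: the paper itself contains no proof of this statement. Theorem \ref{FischerTheorem2} is quoted verbatim from \cite{FischerEnumerationOfAlternatingSignTrianglesUsingAConstantTermApproach} (Theorem 2 there), and the present paper uses it purely as a black box from Corollary \ref{UnevaluatedCTOfLRRestrictedASTSCor} onward. So the comparison has to be made with Fischer's own argument, and your outline does follow the same route she takes: encode an AST by the partial sums of its columns, so that the alternating and row-sum conditions become a monotone-triangle-type structure with prescribed bottom data; apply her constant-term machinery to that structure to produce the kernel; and thread the statistic $\rho$ through via the factors $(t+X_i)$ and the prefactor $t^{-p+1}$. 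Your reading of the $t$-bookkeeping (the constant term in $t$ keeps exactly the terms with $p-1$ marked columns) and your $n=2$ sanity check are both correct.

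However, as you concede in your final sentence, the two steps you defer are not technical details — they are the theorem. (a) The kernel $\prod_{1\leq i<j\leq n-1}(1+X_i+X_iX_j)(X_j-X_i)$ is asserted, not obtained. To get it you must first construct the explicit bijection between ASTs with non-central $1$-columns in positions $j_1<\dots<j_{n-1}$ of prescribed types ($10$ or $11$) and monotone triangles with an explicitly computed bottom row of length $n-1$; only after that does Fischer's constant-term formula for monotone triangles with prescribed bottom row apply and deliver the kernel together with the monomials $X_i^{-k_i}$, and it is inside this bijection that the shift by one you mention actually hides. (b) The identification of the marked columns with ``$11$-columns left of centre, $10$-columns right of centre'' cannot be verified — indeed cannot even be set up — without that same bijection, because whether a given column contributes $tX_i^{-j_i}$ or $X_i^{1-j_i}$ is decided by how its bottom entry and its side of the central column shift its bottom-row entry in the monotone triangle, and this case analysis is asymmetric between the two sides. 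A single consistency check at $n=2$, where the kernel is empty and only one non-central column exists, cannot substitute for it. So the plan is the right one — it is Fischer's — but as written the proposal records the statement's two essential verifications as open obstacles rather than discharging them, and therefore does not establish the theorem.
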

 As an immediate consequence of the above Theorem \ref{FischerTheorem2}, we obtain a constant term expression for the number of $(n,l,r)$ - ASPs $P$ with $\rho(P)=p$.
\begin{cor}\label{UnevaluatedCTOfLRRestrictedASTSCor}
Let $n\in\mathbb{N}$, $1\leq p\leq n$ and $0\leq l\leq n-2< r\leq 2n-3$. The number of $(n,l,r)$ - ASPs $P$ with $\rho(P)=p$ is given by the constant term with respect to $X_1,...,X_n,t$ of
\begin{equation}
    t^{1-p}\sum_{l\leq j_1<j_2<...<j_{n-1}\leq r}\prod_{i=1}^{n-1}(t+X_i)X_i^{-j_i}\prod_{1\leq i<j\leq n-1}(1+X_i+X_iX_j)(X_j-X_i).
\end{equation}
\end{cor}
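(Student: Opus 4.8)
The plan is to derive Corollary \ref{UnevaluatedCTOfLRRestrictedASTSCor} directly from Theorem \ref{FischerTheorem2} by summing the given constant-term expression over the allowed range of column positions. Recall that an $(n,l,r)$ - ASP is precisely an AST of order $n$ whose $n-1$ non-central $1$-columns occupy positions $j_1 < \dots < j_{n-1}$ with $l \leq j_1$ and $j_{n-1} \leq r$. Theorem \ref{FischerTheorem2} already tells us that the number of ASTs with $\rho = p$ and a fixed sequence of non-central $1$-column positions $0 \leq j_1 < \dots < j_{n-1} \leq 2n-3$ equals the constant term in $X_1,\dots,X_{n-1},t$ of the expression \eqref{ConstantTermNumberOfJ1...JN-1ASTS}. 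Since the defining inequalities for an ASP simply restrict which position-tuples $(j_1,\dots,j_{n-1})$ are admissible, the count of $(n,l,r)$ - ASPs with $\rho(P) = p$ is obtained by summing that count over all tuples with $l \leq j_1 < \dots < j_{n-1} \leq r$.

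First I would invoke Theorem \ref{FischerTheorem2} to write, for each fixed admissible tuple $(j_1,\dots,j_{n-1})$, the number of ASTs with that column data and $\rho = p$ as
\begin{equation*}
\operatorname{CT}_{X_1,\dots,X_{n-1},t}\, t^{1-p}\prod_{i=1}^{n-1}(t+X_i)X_i^{-j_i}\prod_{1\leq i<j\leq n-1}(1+X_i+X_iX_j)(X_j-X_i).
\end{equation*}
Then I would sum both sides over all tuples satisfying $l \leq j_1 < \dots < j_{n-1} \leq r$. The left-hand sides add up to exactly the number of $(n,l,r)$ - ASPs with $\rho(P) = p$, by the characterization of ASPs recalled above. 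On the right-hand side, the only factor depending on the tuple is $\prod_{i=1}^{n-1} X_i^{-j_i}$, so summation passes through the constant-term operator and through the tuple-independent factors, yielding the claimed expression with $\sum_{l \leq j_1 < \dots < j_{n-1} \leq r}$ placed inside the constant term. The shift from taking the constant term in $X_1,\dots,X_{n-1},t$ to $X_1,\dots,X_n,t$ is harmless: the summand contains no $X_n$, so the constant term in the extra variable $X_n$ is trivially the coefficient of $X_n^0$, which returns the same value.

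The main point requiring a little care — though it is essentially bookkeeping rather than a genuine obstacle — is the justification that the finite sum commutes with the constant-term extraction, which is immediate since the constant-term functional is linear and the sum is finite. I would also explicitly note that because $t^{1-p}$ and the product $\prod_{1\leq i<j\leq n-1}(1+X_i+X_iX_j)(X_j-X_i)$ do not involve the $j_i$, they factor out of the summation, leaving only $\sum_{l \leq j_1 < \dots < j_{n-1} \leq r}\prod_{i=1}^{n-1}(t+X_i)X_i^{-j_i}$ inside. This directly matches the displayed formula in the corollary, completing the argument.
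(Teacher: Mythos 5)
Your proposal is correct and matches the paper's intent exactly: the paper states this corollary as an ``immediate consequence'' of Theorem \ref{FischerTheorem2}, and your argument --- summing Fischer's constant-term expression over the admissible tuples $l\leq j_1<\dots<j_{n-1}\leq r$ and using linearity of the constant-term functional --- is precisely that omitted routine verification, including the correct observation that the extra variable $X_n$ in the statement is harmless since it does not occur in the expression.
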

Our first step in the proof of Theorem \ref{RelationASTsMagogPentagons} is the following intermediate result giving a closed form for this constant term.
\begin{thm}\label{ThmConstantTermExpressionForLRRestrictedASTS}
Let $n\in\mathbb{N}$, $1\leq p\leq n$ and $0\leq l\leq n-2< r\leq 2n-3$. The number of $(n,l,r)$ - ASPs $P$ with $\rho(P)=p$ is the constant term with respect to $t$ in
\begin{multline}\label{SumOfDeterminantsLRRestrictedASTFormula}
    t^{1-p}
    \sum_{1\leq e_1<e_2<...<e_{n-1}\leq r+1}\det_{1\leq i,j\leq n-1}\bigg(t\binom{j-1}{e_i-j}-t\binom{j-1}{r-e_i-l+2n-1-j}\\
    +\binom{j-1}{e_i-1-j}-\binom{j-1}{r-e_i-l+2n-j}\bigg).
\end{multline}
\end{thm}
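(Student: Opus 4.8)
The plan is to start from the constant-term expression in Corollary~\ref{UnevaluatedCTOfLRRestrictedASTSCor} and convert it, term by term in the sum over $l\leq j_1<\cdots<j_{n-1}\leq r$, into a sum of determinants. Concretely, I would first take the constant term with respect to $X_1,\dots,X_{n-1}$ only, leaving the variable $t$ untouched, so that the quantity we must compute is
\[
t^{1-p}\sum_{l\leq j_1<\cdots<j_{n-1}\leq r}\mathrm{CT}_{X_1,\dots,X_{n-1}}\prod_{i=1}^{n-1}(t+X_i)X_i^{-j_i}\prod_{1\leq i<j\leq n-1}(1+X_i+X_iX_j)(X_j-X_i).
\]
The key structural observation is that the product $\prod_{i<j}(X_j-X_i)$ is a Vandermonde determinant, which immediately signals that the constant-term extraction should be organized as a determinant via the standard Lindström--Gessel--Viennot/antisymmetrization trick: antisymmetrizing the remaining factors in the $X_i$ against the Vandermonde lets one write the whole expression as $\det_{1\leq i,j\leq n-1}$ of a single-variable constant term. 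The main work is therefore to understand the single-variable generating function $(t+X)X^{-j}\prod$(the piece of $\prod_{i<k}(1+X_i+X_iX_k)$ that survives) and to extract its constant term in closed form as a binomial coefficient depending on $j$ and on the row index.

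Next I would deal with the factor $\prod_{1\leq i<j\leq n-1}(1+X_i+X_iX_j)$, which is not symmetric and does not factor over individual variables, so it cannot be absorbed into the determinant naively. My approach here is to expand $1+X_i+X_iX_j=(1+X_i)+X_iX_j$ and reorganize so that the $X_iX_j$ cross terms can be incorporated into a change of summation index. I expect that after introducing new summation variables $e_1<\cdots<e_{n-1}$ (which will become the row labels in the determinant of the statement), the cross terms get absorbed into a shift of the exponent, turning the product of pairwise factors into something that, against the Vandermonde, collapses into a clean determinant. This is exactly the mechanism by which the single sum over strictly increasing $j_i$ with the non-symmetric product becomes a single sum over strictly increasing $e_i$ of a determinant with entries indexed independently by $(i,j)$.

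The binomial entries themselves should then emerge from two sources: the $\binom{j-1}{e_i-j}$ and $\binom{j-1}{e_i-1-j}$ terms come from extracting the constant term of $(t+X)X^{-\bullet}(1+X)^{j-1}$-type expressions (the factor $(1+X)^{j-1}$ being what the product $\prod(1+X_i)$ contributes to the $j$-th determinant column after antisymmetrization), while the two subtracted terms $\binom{j-1}{r-e_i-l+2n-1-j}$ and $\binom{j-1}{r-e_i-l+2n-j}$ should arise from the boundary of the summation, i.e. from the restriction $j_i\leq r$ combined with $j_i\geq l$. Here I anticipate using a reflection/complementation identity: the difference of two geometric-type sums over the window $[l,r]$ produces the difference of two binomial coefficients, the second one being the ``reflected'' contribution with argument $r-e_i-l+2n-1-j$. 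The factor $t$ versus no-$t$ split across the four binomials tracks the two monomials in $(t+X)$.

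\textbf{Main obstacle.} The hardest and most error-prone step will be the bookkeeping that converts the non-symmetric product $\prod_{i<j}(1+X_i+X_iX_j)$ together with the windowed sum $\sum_{l\leq j_1<\cdots<j_{n-1}\leq r}$ into the clean single sum $\sum_{1\leq e_1<\cdots<e_{n-1}\leq r+1}$ of a determinant: one must simultaneously (i) justify antisymmetrization against the Vandermonde, (ii) track how the cross terms $X_iX_j$ reindex the exponents so that each determinant entry depends only on its own $(i,j)$, and (iii) correctly produce the reflected boundary binomials with the precise arguments $r-e_i-l+2n-1-j$ and $r-e_i-l+2n-j$, getting every shift by one exactly right. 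Verifying the final index arithmetic on the small example ($n=3$, $l=0$, $r=2$, where the statement must reproduce the five ASPs with their $\rho$-values) is the natural sanity check I would run before and after the general computation.
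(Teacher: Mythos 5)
There is a genuine gap, and it sits exactly at the step you flag as the ``main obstacle.'' Your plan for the non-symmetric product $\prod_{1\leq i<j\leq n-1}(1+X_i+X_iX_j)$ --- expand each factor as $(1+X_i)+X_iX_j$ and absorb the cross terms into a reindexing $j_i\rightsquigarrow e_i$ of the windowed sum --- is not a viable mechanism. Expanding the $\binom{n-1}{2}$ pairwise factors produces exponentially many terms whose exponents are coupled across all variables, and no change of summation variables linearizes this against the Vandermonde. What actually makes this step work in the paper is a substantive external result: Fischer's bounded Littlewood identity (Lemma~\ref{BoundedLittlewoodIdentity}, i.e.\ Corollary~1.2 of her paper on that identity), which evaluates
\[
\mathbf{ASym}\Bigg[\prod_{1\leq i<j\leq n-1}(1+X_j+X_iX_j)\sum_{0\leq k_1<\cdots<k_{n-1}\leq r-l}\prod_{i=1}^{n-1}X_i^{k_i}\Bigg]
\]
in closed form as a determinant \emph{divided by} $\prod_{i}(1-X_i)\prod_{i<j}(1-X_iX_j)$. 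Your proposal contains no substitute for this identity, and the two subtracted ``reflected'' binomials you hope to obtain from the boundary of the window $[l,r]$ are in fact the images of the second term $X_i^{r-l+2n-2-j}(1+X_i^{-1})^{j-1}$ inside that determinant --- they are built into the identity, not derivable by a complementation trick on geometric sums.

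A second, related omission: even after applying Lemma~\ref{BoundedLittlewoodIdentity}, one is not yet at a ``clean determinant of single-variable constant terms.'' The leftover denominator $\prod_i(1-X_i)\prod_{i<j}(1-X_iX_j)$ must be cleared using the classical Littlewood identity (Zeilberger's Subsublemma~1.1.3), which re-expresses it as an antisymmetrizer of $\prod_i X_i^{i-1}\bigl(1-\prod_{j\geq i}X_j\bigr)^{-1}$; the paper then symmetrizes a second time and expands this factor as $\sum_{0\leq b_1<\cdots<b_{n-1}}\prod_i X_i^{b_i-r}$. It is this expansion --- not any reindexing of the original $j_i$'s --- that creates the outer sum over increasing sequences appearing in (\ref{SumOfDeterminantsLRRestrictedASTFormula}) (after the substitution $s_i=r+1-b_{n-i}$ and a column reversal); the original sum over $l\leq j_1<\cdots<j_{n-1}\leq r$ is entirely consumed by the bounded Littlewood identity. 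Only then does your (correct) final step apply: since $X_i$ occurs only in row $i$, the constant term can be taken entry-by-entry via the binomial theorem, producing the four binomial coefficients with the $t$/no-$t$ split coming from $(t+X_i)$. So the skeleton of your computation (symmetrizer trick, antisymmetrization against the Vandermonde, row-wise constant-term extraction) matches the paper, but the two identities that carry the proof --- the bounded Littlewood identity and the Littlewood identity --- are missing, and the reindexing heuristic you propose in their place would not go through.
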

We define the \emph{symmetrizer} $\SymmN$ and \emph{antisymmetrizer} $\AsymmN$ of a function $f(X_1,...,X_{n})$ with respect to $X_1,...,X_{n}$.
\begin{gather*}
    \SymmN f(X_1,...,X_{n})\defeq \sum_{\sigma\in \mathcal{S}_{n}}f(X_{\sigma(1)},...,X_{\sigma(n)}),\\
    \AsymmN f(X_1,...,X_{n})\defeq \sum_{\sigma\in \mathcal{S}_{n}}\Sign(\sigma) f(X_{\sigma(1)},...,X_{\sigma(n)}).
\end{gather*}
We will make use of the following fact, telling us that for any function $s(X_1,...,X_{n})$ symmetric in $X_1,...,X_{n}$, we have that $$\SymmN s(X_1,...,X_{n})=n!s(X_1,...,X_{n}),$$
therefore since any constant function is symmetric, we have in particular that the constant term of a function $f(X_1,...,X_{n})$ with respect to $X_1,...,X_{n}$ is equal to the constant term of $\frac{1}{n!}\SymmN f(X_1,...,X_{n})$ with respect to $X_1,...,X_{n}$. We will refer to this as the ``symmetrizer trick".

Another important property which we will make use of frequently throughout the following calculations is that for a function $a(X_1,...,X_{n})$ antisymmetric in $X_1,...,X_{n}$ and a function $f(X_1,...,X_{n})$, we have that
\begin{multline}
    \SymmN\Big(f(X_1,...,X_{n})a(X_1,...,X_{n})\Big)\\
    =a(X_1,...,X_{n})\AsymmN\big(f(X_1,...,X_{n})\big).
\end{multline}
Furthermore we will also use that for any function $s(X_1,...,X_{n})$ symmetric in $X_1,...,X_{n}$ and a function $f(X_1,...,X_{n})$, we have that
\begin{multline}
    \SymmN\Big(f(X_1,...,X_{n})s(X_1,...,X_{n})\Big)\\
    =s(X_1,...,X_{n})\SymmN\big(f(X_1,...,X_{n})\big).
\end{multline}
One of the main ingredients of the proof is a special case of Corollary 1.2 by Fischer from \cite{BoundedLittlewoodIdentityRelatedToAlternatingSignMatrices} obtained by setting $w=0$ provided in the following Lemma $3.4$. 
\begin{lem}[\cite{BoundedLittlewoodIdentityRelatedToAlternatingSignMatrices}, Cor. 1.2]\label{BoundedLittlewoodIdentity}
For $n\in\mathbb{N}$ we have that
\begin{multline}
    \AsymmN\bigg[\prod_{1\leq i<j\leq n}(1+X_j+X_iX_j)\sum_{0\leq k_1<k_2<...<k_{n}\leq r}\prod_{i=1}^{n}X_{i}^{k_i}\bigg]\\
=\frac{\det_{1\leq i,j\leq n}\bigg(X_i^{j-1}(1+X_i)^{j-1}-X_i^{r+2n-j}(1+X_i^{-1})^{j-1}\bigg)}{\prod_{i=1}^{n}(1-X_i)\prod_{1\leq i<j\leq n}(1-X_jX_i)}.
\end{multline}
\end{lem}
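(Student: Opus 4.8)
The plan is to recognize the statement as the specialization $w=0$ of Fischer's Corollary 1.2 in \cite{BoundedLittlewoodIdentityRelatedToAlternatingSignMatrices}. The primary and shortest proof is therefore to substitute $w=0$ into that corollary and check that every factor collapses to the form written here: that the left-hand product and the summation region $0\le k_1<\dots<k_n\le r$ are exactly the $w=0$ reductions of Fischer's alphabet, and that the right-hand determinant together with the denominator $\prod_{i=1}^n(1-X_i)\prod_{1\le i<j\le n}(1-X_jX_i)$ is what her closed form yields at $w=0$. Since the content of the identity is already established there, this is the honest route.

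For completeness, and because it explains the shape of the right-hand side, I would also give a self-contained argument. First, both sides are antisymmetric in $X_1,\dots,X_n$: the left-hand side because it is literally an antisymmetrization, and the right-hand side because $\det_{1\le i,j\le n}$ changes sign under interchanging two rows while the denominator is symmetric. Next I would study the determinant $D:=\det_{1\le i,j\le n}\big(f_j(X_i)\big)$, where $f_j(X)=X^{j-1}(1+X)^{j-1}-X^{r+2n-j}(1+X^{-1})^{j-1}$. Absorbing the factor $X^{-(j-1)}$ from $(1+X^{-1})^{j-1}=X^{-(j-1)}(1+X)^{j-1}$ gives the compact form
\[
f_j(X)=(1+X)^{j-1}\big(X^{j-1}-X^{r+2n-2j+1}\big).
\]
From this, $f_j(1)=0$ for every $j$, so row $i$ of $D$ vanishes identically when $X_i=1$, and hence $D$ is divisible by $\prod_{i=1}^n(1-X_i)$. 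A direct substitution gives $f_j(1/X)=-X^{-(r+2n-1)}f_j(X)$, a proportionality constant independent of $j$; consequently setting $X_j=1/X_i$ makes rows $i$ and $j$ proportional, $D$ vanishes, and $D$ is divisible by $\prod_{1\le i<j\le n}(1-X_jX_i)$. This already shows the right-hand side is a genuine Laurent polynomial and pins down exactly the stated denominator.

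It then remains to match two antisymmetric Laurent polynomials. I would interchange the sum and the antisymmetrizer on the left, writing it as $\sum_{0\le k_1<\dots<k_n\le r}\AsymmN\big[\prod_{1\le i<j\le n}(1+X_j+X_iX_j)\prod_{i=1}^n X_i^{k_i}\big]$, and use the factorization $1+X_j+X_iX_j=1+(1+X_i)X_j$, which is affine in the larger variable $X_j$. This affine-in-$X_j$ structure is what allows each antisymmetrized summand to be organized determinantally; summing the contributions over the staircase region and invoking a reflection/inclusion--exclusion argument to encode the upper bound $k_n\le r$ reproduces precisely the two-term entries $X_i^{j-1}(1+X_i)^{j-1}-X_i^{r+2n-j}(1+X_i^{-1})^{j-1}$ of $D$.

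The main obstacle is the asymmetry of $\prod_{1\le i<j\le n}(1+X_j+X_iX_j)$: since it is not symmetric it cannot be pulled outside the antisymmetrizer as a symmetric factor, and the variable coupling it introduces must be resolved simultaneously with the truncation $k_n\le r$. I expect the bound to be the delicate point, and the subtracted ``reflected'' terms $-X_i^{r+2n-j}(1+X_i^{-1})^{j-1}$ in $D$ to be the fingerprint of the reflection handling it; making that reflection precise, so that the unbounded contribution and its reflected correction assemble into the stated determinant, is where the real work lies. For the paper itself, invoking \cite{BoundedLittlewoodIdentityRelatedToAlternatingSignMatrices} at $w=0$ avoids this entirely.
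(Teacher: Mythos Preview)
Your primary approach---recognizing the lemma as the specialization $w=0$ of Corollary~1.2 in \cite{BoundedLittlewoodIdentityRelatedToAlternatingSignMatrices}---is exactly what the paper does: it states the lemma with that citation and the remark ``obtained by setting $w=0$'' and gives no further proof. Your supplementary self-contained sketch (divisibility of the determinant by the denominator, the reflection heuristic for the truncation $k_n\le r$) is additional and not present in the paper; it is a reasonable outline but, as you yourself note, the reflection step is left imprecise, so for the purposes of the paper the citation alone suffices.
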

Having collected the necessary ingredients, we can start with the proof, so recall what we actually want to prove in Theorem \ref{ThmConstantTermExpressionForLRRestrictedASTS} and let $n\in\mathbb{N}$, $1\leq p\leq n$ and $0\leq l\leq n-2< r\leq 2n-3$. By Corollary \ref{UnevaluatedCTOfLRRestrictedASTSCor}, we know that the number of all $(n,l,r)$ - ASPs $T$ with $\rho(T)=p$ is given by the constant term with respect to $X_1,...,X_{n-1},t$ of
\begin{multline}
    t^{1-p}\sum_{l\leq j_1<j_2<...<j_{n-1}\leq r}\prod_{i=1}^{n-1}(t+X_i)X_i^{-j_i}\prod_{1\leq i<j\leq n-1}(1+X_i+X_iX_j)(X_j-X_i)\\
    =t^{1-p}\prod_{i=1}^{n-1}X_i^{-l}\sum_{0\leq j_1<j_2<...<j_{n-1}\leq r-l}\prod_{i=1}^{n-1}(t+X_i)X_i^{-j_i}\prod_{1\leq i<j\leq n-1}(1+X_i+X_iX_j)(X_j-X_i).
\end{multline}
Since we are only looking for the constant term with respect to $X_1,...,X_{n-1}$ we may substitute $X_i=X_{n-i}$ to transform the expression as below.
\begin{multline*}
t^{1-p}\prod_{i=1}^{n-1}X_{n-i}^{-l}\sum_{0\leq j_1<j_2<...<j_{n-1}\leq r-l}\prod_{i=1}^{n-1}(t+X_{n-i})X_{n-i}^{-j_i}\\
\times \prod_{1\leq i<j\leq n-1}(1+X_{n-i}+X_{n-i}X_{n-j})(X_{n-j}-X_{n-i})\\
=t^{1-p}\prod_{i=1}^{n-1}X_i^{-l}\sum_{0\leq j_1<j_2<...<j_{n-1}\leq r-l}\prod_{i=1}^{n-1}(t+X_i)X_{n-i}^{-j_i}\prod_{1\leq i<j\leq n-1}(1+X_j+X_iX_j)(X_i-X_j).
\end{multline*}
Now we make use of the symmetrizer trick and consider the constant term of the expression below instead.
\begin{multline*}
    \frac{t^{1-p}}{(n-1)!}\Symm\bigg[\sum_{0\leq j_1<j_2<...<j_{n-1}\leq r-l}\prod_{i=1}^{n-1}(t+X_i)X_{n-i}^{-j_i}\prod_{i=1}^{n-1}X_i^{-l}\\
    \times \prod_{1\leq i<j\leq n-1}(1+X_j+X_iX_j)(X_i-X_j)\bigg]
\end{multline*}
Observing that $\prod_{i=1}^{n-1}(t+X_i)X_i^{-l}$ is symmetric and $\prod_{1\leq i<j\leq n-1}(X_i-X_j)$ is antisymmetric we can rewrite this as follows.
\begin{multline*}
    \frac{t^{1-p}}{(n-1)!}\prod_{i=1}^{n-1}(t+X_i)X_i^{-l}\\
    \times\Symm\bigg[\sum_{0\leq j_1<j_2<...<j_{n-1}\leq r-l}\prod_{i=1}^{n-1}X_{n-i}^{-j_i}\prod_{1\leq i<j\leq n-1}(1+X_j+X_iX_j)(X_i-X_j)\bigg]\\
    =\frac{t^{1-p}}{(n-1)!}\prod_{i=1}^{n-1}(t+X_i)X_i^{-l}\prod_{1\leq i<j\leq n-1}(X_i-X_j)\\
    \times \Asymm\bigg[\prod_{1\leq i<j\leq n-1}(1+X_j+X_iX_j)\sum_{0\leq j_1<j_2<...<j_{n-1}\leq r-l}\prod_{i=1}^{n-1}X_{n-i}^{-j_i}\bigg]
\end{multline*}
Noting that
\begin{multline*}
    \sum_{0\leq j_1<j_2<...<j_{n-1}\leq r-l}\prod_{i=1}^{n-1}X_{n-i}^{-j_i}=
    \prod_{i=1}^{n-1}X_{n-i}^{-(r-l)}\sum_{0\leq j_1<j_2<...<j_{n-1}\leq r-l}\prod_{i=1}^{n-1}X_{n-i}^{r-l-j_i}\\
    =\prod_{i=1}^{n-1}X_{i}^{l-r}\sum_{0\leq k_1<k_2<...<k_{n-1}\leq r-l}\prod_{i=1}^{n-1}X_{i}^{k_i}.
\end{multline*}
and since $\prod_{i=1}^{n-1}X_{i}^{l-r}$ is symmetric, we see that the expression is equal to
\begin{multline*}
    \frac{t^{1-p}}{(n-1)!}\prod_{i=1}^{n-1}(t+X_i)X_{i}^{-r}\prod_{1\leq i<j\leq n-1}(X_i-X_j)\\
    \times\Asymm\bigg[\prod_{1\leq i<j\leq n-1}(1+X_j+X_iX_j)\sum_{0\leq k_1<k_2<...<k_{n-1}\leq r-l}\prod_{i=1}^{n-1}X_{i}^{k_i}\bigg].
\end{multline*}
Using Lemma \ref{BoundedLittlewoodIdentity} for $n-1$, we see that this is equal to
\begin{multline*}
    \frac{t^{1-p}}{(n-1)!}\prod_{i=1}^{n-1}(t+X_i)X_{i}^{-r}\prod_{1\leq i<j\leq n-1}(X_i-X_j)\\
    \times \frac{\det_{1\leq i,j\leq n-1}\bigg(X_i^{j-1}(1+X_i)^{j-1}-X_i^{r-l+2n-2-j}(1+X_i^{-1})^{j-1}\bigg)}{\prod_{i=1}^{n-1}(1-X_i)\prod_{1\leq i<j\leq n-1}(1-X_jX_i)}.
\end{multline*}
Now we make use of an identity equivalent to the so called ``Littlewood-identity" 
\begin{multline*}
\Asymm\bigg[\prod_{i=1}^{n-1}X_i^{i-1}\Big(1-\prod_{j=i}^{n-1}X_j\Big)^{-1}\bigg]=\prod_{i=1}^{n-1}(1-X_i)^{-1}\prod_{1\leq i<j\leq n-1}\frac{X_j-X_i}{(1-X_jX_i)}\\
=(-1)^{\binom{n-1}{2}}\prod_{i=1}^{n-1}(1-X_i)^{-1}\prod_{1\leq i<j\leq n-1}\frac{X_i-X_j}{(1-X_jX_i)},
\end{multline*}
which can be found in \cite{ProofOfTheAlternatingSignMatrixConjecture} as Subsublemma 1.1.3 and yields for our expression from above
\begin{multline*}
    \frac{t^{1-p}}{(n-1)!}\det_{1\leq i,j\leq n-1}\bigg(X_i^{j-1}(1+X_i)^{j-1}-X_i^{r-l+2n-2-j}(1+X_i^{-1})^{j-1}\bigg)\\
    \times\prod_{i=1}^{n-1}(t+X_i)X_{i}^{-r}(1-X_i)^{-1}\prod_{1\leq i<j\leq n-1}\frac{(X_i-X_j)}{(1-X_jX_i)}\\
    =\frac{t^{1-p}(-1)^{\binom{n-1}{2}}}{(n-1)!}\det_{1\leq i,j\leq n-1}\bigg(X_i^{j-1}(1+X_i)^{j-1}-X_i^{r-l+2n-2-j}(1+X_i^{-1})^{j-1}\bigg) \\\times \prod_{i=1}^{n-1}(t+X_i)X_{i}^{-r}\Asymm\bigg[\prod_{i=1}^{n-1}X_i^{i-1}\Big(1-\prod_{j=i}^{n-1}X_j\Big)^{-1}\bigg].
\end{multline*}
Since any determinant is antisymmetric, we can push the determinant into the antisymmetrizer and this transforms it into a symmetrizer.
\begin{multline*}
\frac{t^{1-p}(-1)^{\binom{n-1}{2}}}{(n-1)!}\Symm\bigg[\prod_{i=1}^{n-1}(t+X_i)X_{i}^{-r}X_i^{i-1}\Big(1-\prod_{j=i}^{n-1}X_j\Big)^{-1}\\
\times\det_{1\leq i,j\leq n-1}\bigg(X_i^{j-1}(1+X_i)^{j-1}-X_i^{r-l+2n-2-j}(1+X_i^{-1})^{j-1}\bigg)\bigg]
\end{multline*}
Again making use of the symmetrizer trick, the constant term of this expression is the same as the constant term of the expression below.
\begin{multline*}
    t^{1-p}(-1)^{\binom{n-1}{2}}\det_{1\leq i,j\leq n-1}\bigg(X_i^{j-1}(1+X_i)^{j-1}-X_i^{r-l+2n-2-j}(1+X_i^{-1})^{j-1}\bigg)\\
    \times\prod_{i=1}^{n-1}(t+X_i)X_{i}^{-r}X_i^{i-1}\Big(1-\prod_{j=i}^{n-1}X_j\Big)^{-1}
\end{multline*}
Now observe that
\begin{multline*}
    \prod_{i=1}^{n-1}X_i^{i-1-r}\Big(1-\prod_{j=i}^{n-1}X_j\Big)^{-1}=\prod_{i=1}^{n-1}X_i^{i-1-r}\Big(\sum_{k_i\geq 0}\prod_{j=i}^{n-1}X_j^{k_i}\Big)=\sum_{k_1,...,k_{n-1}\geq 0}\prod_{i=1}^{n-1}X_i^{i-1-r}\prod_{j=i}^{n-1}X_j^{k_i}\\
    =\sum_{k_1,...,k_{n-1}\geq 0}\prod_{i=1}^{n-1}X_i^{k_1+...+k_i+i-1-r}=\sum_{0\leq b_1<b_2<...<b_{n-1}}\prod_{i=1}^{n-1}X_i^{b_i-r},
\end{multline*}
hence using the multilinearity of the determinant, the above is equal to
\begin{multline*}
    t^{1-p}(-1)^{\binom{n-1}{2}}\sum_{0\leq b_1<b_2<...<b_{n-1}}\det_{1\leq i,j\leq n-1}\bigg(X_i^{b_i-r}(t+X_i)\\\times\big(X_i^{j-1}(1+X_i)^{j-1}-X_i^{r-l+2n-2-j}(1+X_i^{-1})^{j-1}\big)\bigg).
\end{multline*}
We only need to calculate the constant term with respect to $X_1,...,X_{n-1}$ of this sum of determinants, i.e. the constant term of every determinant. For this note that the determinant is a sum of products of the entries, again it suffices to calculate the constant term of each of the products. But since in each of the products exactly one entry from each row appears, the only way we can obtain a constant is by multiplying constants. This is because the variable $X_i$ appears only in row $i$, so if the factor from row $i$ in the product is not constant, then it contains a power of $X_i$ which cannot be canceled with the other factors in the product. hence it suffices to calculate the constant term of $$X_i^{b_i-r}(t+X_i)\big(X_i^{j-1}(1+X_i)^{j-1}-X_i^{r-l+2n-j}(1+X_i^{-1})^{j-1}\big)$$for arbitrary $i,j,b_i$. Using the binomial theorem, a calculation yields 
\begin{multline}
    t^{1-p}(-1)^{\binom{n-1}{2}}\sum_{0\leq b_1<b_2<...<b_{n-1}}\det_{1\leq i,j\leq n-1}\bigg(t\binom{j-1}{r+1-j-b_i}-t\binom{j-1}{b_i-l+2n-2-j}\\
    +\binom{j-1}{r-b_i-j}-\binom{j-1}{b_i-l-1+2n-j}\bigg).
\end{multline}
Note that the sum is actually bounded, since for $b_i>r$, the $i$-th row is all $0$, hence we can substitute $s_i=r+1-b_{n-i}$ and reorder the columns via $\begin{pmatrix}1&2&...&n-2&n-1\\n-1&n-2&...&2&1\end{pmatrix}$, cancelling the sign, to obtain (\ref{SumOfDeterminantsLRRestrictedASTFormula}).
\section{Proof of Theorem \ref{RelationASTsMagogPentagons}}
Now the first major part of the proof of Theorem \ref{RelationASTsMagogPentagons} is the following proposition, giving a combinatorial interpretation in terms of non-intersecting lattice paths for our constant term expression obtained in Theorem \ref{SumOfDeterminantsLRRestrictedASTFormula}. For this let us define the weighted set $\mathcal{P}_n^{l,r}$ of all non-intersecting $(n-1)$-tuples of lattice paths consisting of north- and east-steps in the integer lattice with
\begin{itemize}
    \item starting points $S_1=(1,-2), S_2=(2,-4),...,S_{n-1}=(n-1,-2(n-1))$,
    \item end-points in $E_1=(1,-1),E_2=(2,-2),...,E_{r+1}=(r+1,-r-1)$,
    \item the path starting at $S_{n-1}$ staying weakly above the line $y=x+l-r-2n+1$,
\end{itemize}
and where a path in a tuple is weighted by $t$ if and only if it ends with a north step.
\begin{prop}\label{PropRelatingSumOfDetsAndLatticePaths}
Let $n\in\mathbb{N}$, $1\leq p\leq n$ and $0\leq l\leq n-2< r\leq 2n-3$, such that $l+r<2n-2$ and $r-l>n-3$.  Then
    \begin{multline*}
        \sum_{1\leq e_1<e_2<...<e_{n-1}\leq r+1}\det_{1\leq i,j\leq n-1}\bigg(t\binom{j-1}{e_i-j}-t\binom{j-1}{r-e_i-l+2n-1-j}\\
        +\binom{j-1}{e_i-1-j}-\binom{j-1}{r-e_i-l+2n-j}\bigg)
    \end{multline*}
is the generating function of the weighted set $\mathcal{P}_n^{l,r}$.
\end{prop}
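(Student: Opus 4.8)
The goal is to interpret the sum of determinants as a generating function for non-intersecting lattice paths via the Lindström–Gessel–Viennot (LGV) lemma. The plan is to read off each determinant entry as a weighted count of a single lattice path from a source to a sink, and then apply LGV to convert the determinant into a signed sum over path-tuples that collapses to the non-intersecting tuples.

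First I would fix the combinatorial model for a single path. The key observation is the binomial identity that a binomial coefficient $\binom{j-1}{e_i-j}$ counts lattice paths (north/east steps) between two appropriately chosen points, since the number of monotone lattice paths from $(a,b)$ to $(c,d)$ is $\binom{(c-a)+(d-b)}{c-a}$. I would match the start points $S_j=(j,-2j)$ and end points $E_{e_i}=(e_i,-e_i)$ so that the number of paths from $S_j$ to $E_{e_i}$ equals a binomial coefficient of the form appearing in the entry. The weighting by $t$ exactly when the path ends with a north step explains the two $t$-terms versus two non-$t$-terms: a path ending in a north step contributes a factor $t$, and shifting the endpoint by one unit (the difference between $\binom{j-1}{e_i-j}$ and $\binom{j-1}{e_i-1-j}$) is precisely the bookkeeping that separates ``ends with north step'' from ``ends with east step.'' So the entry $t\binom{j-1}{e_i-j}+\binom{j-1}{e_i-1-j}$ is the full weighted path-count from $S_j$ to the $j$-th possible endpoint region, with the $t$-weight tracking the final-step condition.

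Next I would account for the two subtracted terms, $-t\binom{j-1}{r-e_i-l+2n-1-j}-\binom{j-1}{r-e_i-l+2n-j}$. These encode a reflected family of paths, and they arise because the path starting at $S_{n-1}$ is constrained to stay weakly above the line $y=x+l-r-2n+1$. I would invoke the reflection principle: the paths violating the boundary constraint are counted by reflecting the start (or end) point across the forbidden line, and reflection across $y=x+l-r-2n+1$ sends the index $e_i$ to $r-e_i-l+2n-1-j$ (and the east-ending variant to $r-e_i-l+2n-j$), matching the arguments of the subtracted binomials exactly. Thus each entry is the net weighted count of boundary-respecting paths from $S_j$ to $E_{e_i}$, good minus reflected-bad.

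Finally, with the entries identified as single-path generating functions (with the boundary condition folded into the reflection term), I would apply the LGV lemma: $\det_{i,j}\big(\text{weighted paths }S_j\to E_{e_i}\big)$ equals the signed sum over permutations of tuples of paths, which by the standard non-intersection argument reduces to the generating function of non-intersecting tuples with sources $S_1,\dots,S_{n-1}$ and sinks $E_{e_1},\dots,E_{e_{n-1}}$, all signs being positive because non-crossing forces the identity permutation. Summing over $1\le e_1<\dots<e_{n-1}\le r+1$ then ranges over all choices of sink sets, yielding exactly the generating function of $\mathcal{P}_n^{l,r}$. \emph{The main obstacle} I expect is verifying the boundary condition carefully: one must check that the reflection principle applies cleanly only to the single constrained path starting at $S_{n-1}$ (the lowest one), and that the other paths are automatically boundary-respecting given the geometry of the start and end points; confirming that the reflected arguments land precisely on $r-e_i-l+2n-1-j$ and that no extra constraints leak into the other rows is the delicate bookkeeping step.
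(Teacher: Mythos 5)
Your proposal is correct and follows essentially the same route as the paper's proof: each determinant entry is interpreted, via the reflection principle, as the $t$-weighted count of boundary-respecting lattice paths from $S_j$ to $E_{e_i}$ (the two positive terms being the north-ending and east-ending counts, the two negative terms their reflected corrections), the Lindstr\"om--Gessel--Viennot lemma turns the determinant into the generating function of non-intersecting tuples, and summing over the sinks $1\leq e_1<\dots<e_{n-1}\leq r+1$ gives $\mathcal{P}_n^{l,r}$. Two details you flagged as bookkeeping are resolved exactly as you'd hope: the reflection is across $y=x+l-r-2n$ (one unit below the constraint line, since staying weakly above $y=x+l-r-2n+1$ means never touching $y=x+l-r-2n$), and the constraint legitimately appears in every entry of the determinant because, for non-intersecting tuples, requiring all paths to stay weakly above the line is equivalent to requiring it of the lowest path alone, which is how the paper reconciles the determinant with the definition of $\mathcal{P}_n^{l,r}$.
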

\begin{proof}
The idea is to make use of the Lindström-Gessel-Viennot Lemma, see \cite{BinomialDeterminantsPathsAndHookLengthFormulae} and \cite{OnTheVectorRepresentationsOfInducedMatroids}, and the reflection principle for lattice paths \cite{ReflectionPrincipleOriginalSource}, see for example \cite{LatticePathEnumeration} for a nice overview. First of all observe the following:
\begin{enumerate}
    \item The points $E_m$ lie strictly above the line $L: y=x+l-r-2n$ for all $1\leq m\leq r+1$, since $l+r-2n+2<0$.
    \item The points $E_m^E\defeq(m-1,-m),E_m^N\defeq (m,-m-1)$ lie weakly above the line $L$ for all $1\leq m\leq r+1$, since $l+r-2n+2<0$.
    \item The point $R(E_m^N)\defeq(r-m-l+2n-1,l+m-r-2n)$ is the reflection of the point $(m,-m-1)$ along the line $L$ for all $1\leq m\leq r+1$,
    \item The point $R(E_m^E)\defeq(r-m-l+2n,l+m-r-2n-1)$ is the reflection of the point $(m-1,-m)$ along the line $L$ for all $1\leq m\leq r+1$,
    \item The intersection point $P\defeq(\frac{2n+r-l}{3},\frac{2(l-2n-r)}{3})$ of the line $y=-2x$ and $L$, lies below the line $y=-2(n-1)$, thus in particular below all $S_j$ for $j\in[n-1]$, since $\frac{2n+r-l}{3}> n-1$ which is equivalent to our assumption $r-l>n-3$.
\end{enumerate} See Figure 1 for the setup described above with parameters $n=10,r=12,l=3$.

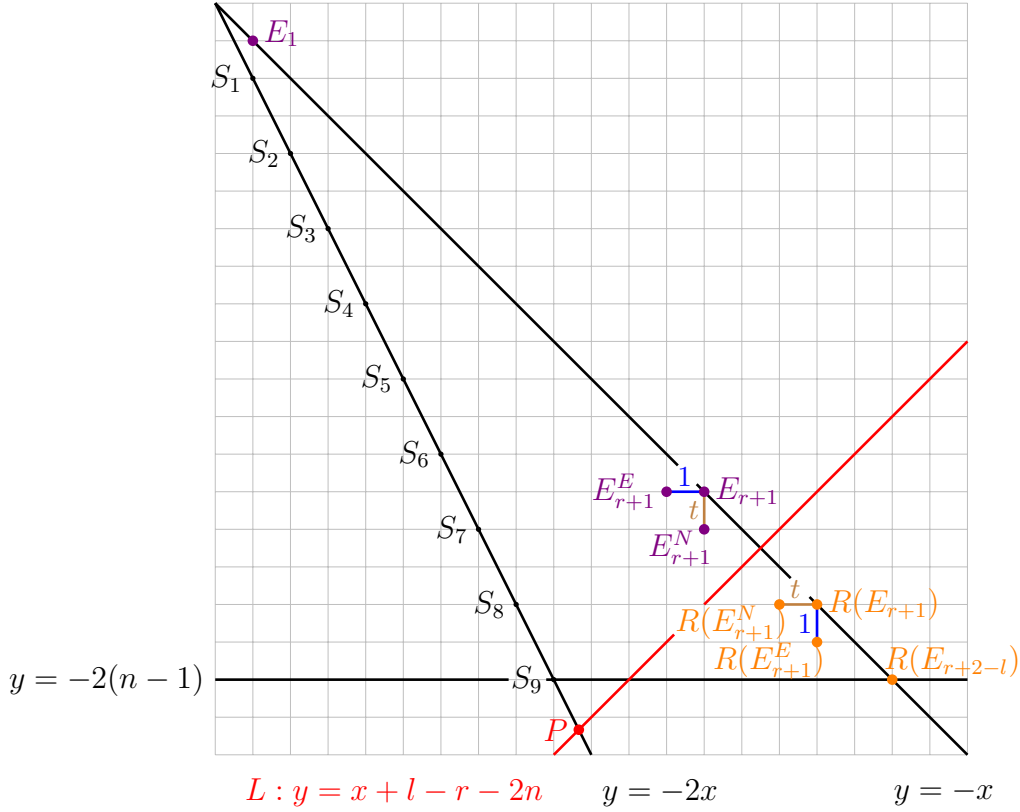
\begin{figure}[ht]
            \begin{tikzpicture}[scale= 0.5]
     \draw[step=1 cm, color=gray!50, very thin] (0, 0) grid (20,-20);
    \draw[line width=1pt, rounded corners] (0,0)--(12.3,-12.3) (12.8,-12.8)--(15.3,-15.3) (15.8,-15.8)--(20,-20) (0,0)--(10,-20) (0,-18)--(7.8,-18) (8.8,-18)--(20,-18);
    \draw[line width=1pt, rounded corners, color=red] (9,-20)--(12.2,-16.8) (13,-16)--(20,-9) ;
    \draw[line width=1pt, rounded corners, color=brown] (15,-16)--(16,-16) (13,-14)--(13,-13) ;
    \draw[line width=1pt, rounded corners, color=blue] (16,-17)--(16,-16) (12,-13)--(13,-13);
    \fill (1,-2) circle (2pt);
    \fill (2,-4) circle (2pt);
    \fill (3,-6) circle (2pt);
    \fill (4,-8) circle (2pt);
    \fill (5,-10) circle (2pt);
    \fill (6,-12) circle (2pt);
    \fill (7,-14) circle (2pt);
    \fill (8,-16) circle (2pt);
    \fill (9,-18) circle (2pt);
    \fill[color=violet] (1,-1) circle (4pt);
    \fill[color=violet] (13,-13) circle (4pt);
    \fill[color=violet] (12,-13) circle (4pt);
    \fill[color=violet] (13,-14) circle (4pt);
    \fill[color=orange] (16,-16) circle (4pt);
    \fill[color=orange] (15,-16) circle (4pt);
    \fill[color=orange] (16,-17) circle (4pt);
    \fill[color=orange] (18,-18) circle (4pt);
    \fill[color=red] (29/3,-58/3) circle (4pt);
    \node[anchor=east] at (1,-2) {$S_1$};
    \node[anchor=east] at (2,-4) {$S_2$};
    \node[anchor=east] at (3,-6) {$S_3$};
    \node[anchor=east] at (4,-8) {$S_4$};
    \node[anchor=east] at (5,-10) {$S_5$};
    \node[anchor=east] at (6,-12) {$S_6$};
    \node[anchor=east] at (7,-14) {$S_7$};
    \node[anchor=east] at (8,-16) {$S_8$};
    \node[anchor=east] at (9,-18) {$S_9$};
    \node[anchor=west, color=orange] at (16,-16) {$R(E_{r+1})$};
    \node[anchor=west, color=blue] at (15.2,-16.5) {\small $1$};
    \node[anchor=west, color=brown] at (12.3,-13.5) {\small $t$};
    \node[anchor=west, color=blue] at (12,-12.6) {\small $1$};
    \node[anchor=west, color=brown] at (15,-15.6) {\small $t$};
    \node[anchor=east, color=orange] at (15.5,-16.5) {$R(E_{r+1}^N)$};
    \node[anchor=east, color=orange] at (16.5,-17.5) {$R(E_{r+1}^E)$};
    \node[anchor=east] at (0,-18) {$y=-2(n-1)$};
    \node[anchor=west] at (10,-21) {$y=-2x$};
    \node[anchor=east] at (21,-21) {$y=-x$};
    \node[anchor=west, color=violet] at (1,-0.8) {$E_1$};
    \node[anchor=west, color=violet] at (13,-13) {$E_{r+1}$};
    \node[anchor=east, color=violet] at (13.5,-14.5) {$E_{r+1}^N$};
    \node[anchor=east, color=violet] at (12,-13) {$E_{r+1}^E$};
    \node[anchor=west, color=orange] at (17.5,-17.5) {$R(E_{r+2-l})$};
    \node[anchor=east, color=red] at (9,-21) {$L: y=x+l-r-2n$};
    \node[anchor=east, color=red] at (29/3,-58/3) {$P$};

    \end{tikzpicture}
    \label{Figure1}
    \caption{Setup for the parameters $n=10,r=12,l=3$.}
    \centering
    \end{figure}
For now let us fix some $j\in[n-1]$ and $1\leq e_1<e_2<...<e_{n-1}\leq r+1$. We claim that upon using the reflection principle, we can interpret $$t\binom{j-1}{e_i-j}-t\binom{j-1}{r-e_i-l+2n-1-j}+\binom{j-1}{e_i-1-j}-\binom{j-1}{r-e_i-l+2n-j},$$ as the generating function of the set of lattice paths with starting point $S_j$ and end-point $E_{e_i}$ staying above the line $L$ shifted up by $1$, i.e. $y=x+l-r-2n+1$, where a path is weighted by $t$ if and only if it ends with a north-step. To see this, recall that by (3) and (4) the points $E_{e_i}^N$ and $R(E_{e_i}^N)$ as well as $E_{e_i}^E$ and $R(E_{e_i}^E)$ are the reflection of one another along the line $L$. By (2) the points $E_{e_i}^E, E_{e_i}^N$ lie weakly above the line $L$, hence by (3) and (4) the points $R(E_{e_i}^E), R(E_{e_i}^N)$ lie weakly below $L$. Recall that by (5) also $S_j$ lies above the line $L$, thus upon using the reflection principle twice, we see that the difference $t\binom{j-1}{e_i-j}-t\binom{j-1}{r-e_i-l+2n-1-j}$ weighted by $t$, gives the number of path starting at $S_j$ and ending at $E_{e_i}^N$ and staying weakly above the line $y=x+l-r-2n+1$, whereas the unweighted difference $\binom{j-1}{e_i-1-j}-\binom{j-1}{r-e_i-l+2n-j}$ yields the number of paths starting at $S_j$, ending at $E_{e_i}^E$ and staying weakly above the line $y=x+l-r-2n+1$. This shows our claim. See again \hyperref[Figure1]{Figure 1} above for $e_i=r+1$ and the distribution of the weights.

Now since by simple geometric reasons any path starting at $S_i$ and ending at $E_{e_k}$ intersects any path starting at $S_j$ and ending at $E_{e_l}$ for $i<j$ and $l<k$ or $j<i$ and $k<l$, we can conclude from the Lindström-Gessel-Viennot Lemma, that 
$$\det_{1\leq i,j\leq n-1}\bigg(t\binom{j-1}{e_i-j}-t\binom{j-1}{r-e_i-l+2n-1-j}+\binom{j-1}{e_i-1-j}-\binom{j-1}{r-e_i-l+2n-j}\bigg)$$
is the generating function of all $(n-1)$-tuples of non-intersecting lattice paths from $S_j$ to $E_{e_j}$ staying weakly above the line $y=x+l-r-2n+1$, for $j\in[n-1]$, where a path is weighted by $t$ if and only if it ends with a north step. Note that since the paths are non-intersecting this condition of all paths staying weakly above the line $y=x+l-r-2n+1$ is equivalent to the lowest path, i.e. the path starting at $S_ {n-1}$ and ending at $E_{e_{n-1}}$, staying weakly above the line $y=x+l-r-2n+1$. Now simply summing over all possible tuples $1\leq e_1<e_2<...<e_{n-1}\leq r+1$ yields the proposition.
\end{proof}
For completeness let us restate the unchecked part of the theorem before we finish the proof.

\emph{Let $n\in\mathbb{N}$, $1\leq p\leq n$ and $0\leq l\leq n-2< r\leq 2n-3$, such that $l+r<2n-2$ and $r-l>n-3$. The set of $(n,l,r)$ - ASPs $P$ with $\rho(P)=p$ is equinumerous with the set of $(0,n,r+2-n,r-l)$ - Magog pentagons $M$ with $\tau(M)=p$.}

After showing the following claim, we will be done by Proposition \ref{PropRelatingSumOfDetsAndLatticePaths}.\\

\textbf{Claim: }\emph{Let $n\in\mathbb{N}$, $1\leq p\leq n$ and $0\leq l\leq n-2< r\leq 2n-3$, such that $l+r<2n-2$ and $r-l>n-3$. There is a weight preserving bijection between the set $\mathcal{P}_n^{l,r}$ and the set of $(0,n,r+2-n,r-l)$ - Magog pentagons $M$ with $\tau(M)=p$.}

\begin{proof}
The bijection we use is actually well known and translates such tuples of paths into Gelfand-Tsetlin-patterns, see for example \cite{FischerEnumerationOfAlternatingSignTrianglesUsingAConstantTermApproach}. Nevertheless we will go through the details here because we have some additional properties for our paths and the $\rho$-statistic which we have to keep track of.

So let us consider the weighted set $\mathcal{P}_n^{l,r}$ as defined in Proposition \ref{PropRelatingSumOfDetsAndLatticePaths}.
Taking $T\in\mathcal{P}_n^{l,r}$, we start by shifting the path $P_j$ starting at $S_j$ by $(x,y)\mapsto (x-j,y+j)$ such that it now starts at $\Tilde{S}_j=(0,-j)$, for all $j\in[n-1]$. Now we obtain an $(n-1)$-tuple $\Tilde{T}$ of kissing lattice paths. By kissing we mean that the path $\Tilde{P}_j$ starting at $\Tilde{S}_j$ stays weakly below the path starting at $\Tilde{S}_{j-1}$. We claim that this is a bijection between the set $\mathcal{P}_n^{l,r}$ and the set $\Tilde{\mathcal{P}}_n^{l,r}$, where $\Tilde{\mathcal{P}}_n^{l,r}$ is the set of all $(n-1)$-tuples of kissing lattice paths $\Tilde{P}_j$ consisting of north- and east-steps in the integer square lattice with
\begin{itemize}
    \item starting points $\Tilde{S}_1=(0,-1),...,\Tilde{S}_{n-1}=(0,-n+1)$,
    \item end-points in $\Tilde{E}_0=(0,0),...,\Tilde{E}_{r-n+2}(r-n+2,n-r-2)$,
    \item the path $\Tilde{P}_{n-1}$ starting at $\Tilde{S}_{n-1}$ staying weakly above the line $y=x+l-r-1$,
\end{itemize}
where a path is weighted by $t$ if and only if it ends with a north-step. This is clear by definition of the map, except for the second and third property. Note that since the paths are kissing, the end-points of the paths $\Tilde{P}_1,...,\Tilde{P}_{n-1}$ have to stay weakly above the end-point of the path $\Tilde{P}_{n-1}$. Since the lowest possible end-point for $P_{n-1}$ is given by $E_{r+1}=(r+1,-r-1)$, we know by definition of the map that the lowest possible end-point for $\Tilde{P}_{n-1}$ is given by $\Tilde{E}_{r-n+2}=(r+1-(n-1),n-1-r-1)$. Similarly the highest possible end-point for $P_1$ is given by $E_1$ thus the highest possible end-point for $\Tilde{P}_1$ is given by $\Tilde{E}_0$, hence the second property follows. The argument for the third property is similar. Since $P_{n-1}$ has to stay weakly above the line $y=x+l-r-2n+1$ and we shift the path by $(x,y)\mapsto \Big(x-(n-1),y+(n-1)\Big)$, we know that $\Tilde{P}_{n-1}$ has to stay weakly above the line $y=x+l-r-1$. Consider for example the tuple $T\in\mathcal{P}_{10}^{1,12}$ of paths for $n=10,r=12,l=1$ on the left and its corresponding tuple $\Tilde{T}$ on the right.
\begin{center}
    \begin{tikzpicture}[scale= 0.2]
    \draw[step=1 cm, color=gray!50, very thin] (0, 0) grid (18,-18);
    \draw[line width=1pt, rounded corners] (0,0)--(18,-18) (0,0)--(9,-18);
    \draw[line width=1pt, rounded corners, color=green] (12,-18)--(18,-12) ;
    \draw[line width=1.5pt] (1,-2)--(1,-1) (2,-4)--(2,-3)--(3,-3) (3,-6)--(3,-5)--(4,-5)--(4,-4) (4,-8)--(5,-8)--(5,-6)--(6,-6) (5,-10)--(6,-10)--(6,-9)--(6,-7)--(7,-7) (6,-12)--(8,-12)--(8,-9)--(9,-9) (7,-14)--(10,-14)--(10,-10) (8,-16)--(8,-15)--(11,-15)--(11,-15)--(11,-12)--(12,-12) (9,-18)--(12,-18)--(12,-17)--(13,-17)--(13,-13);
    \draw[line width=1pt, rounded corners, color=red] (13,-18)--(18,-13) ;
    
    \fill (1,-2) circle (2pt);
    \fill (2,-4) circle (2pt);
    \fill (3,-6) circle (2pt);
    \fill (4,-8) circle (2pt);
    \fill (5,-10) circle (2pt);
    \fill (6,-12) circle (2pt);
    \fill (7,-14) circle (2pt);
    \fill (8,-16) circle (2pt);
    \fill (9,-18) circle (2pt);
    \fill[color=violet] (13,-13) circle (8pt);
    \node[anchor=east] at (1,-2) {$S_1$};
    \node[anchor=east] at (2,-4) {$S_2$};
    \node[anchor=east] at (3,-6) {$S_3$};
    \node[anchor=east] at (4,-8) {$S_4$};
    \node[anchor=east] at (5,-10) {$S_5$};
    \node[anchor=east] at (6,-12) {$S_6$};
    \node[anchor=east] at (7,-14) {$S_7$};
    \node[anchor=east] at (8,-16) {$S_8$};
    \node[anchor=east] at (9,-18) {$S_9$};
    \node[anchor=west, color=violet] at (12,-11) {\small $(r+1,-r-1)$};
    \node[anchor=west, color=red] at (18,-14) {\small $y=x+l-r-2n$};
    \node[anchor=west, color=green] at (18,-12) {\small $y=x+l-r-2n+1$};
\end{tikzpicture}
    \begin{tikzpicture}[scale=0.4]
    \draw[step=1 cm, color=gray!50, very thin] (0, 0) grid (9,-9);
    \draw[line width=3pt, rounded corners, color=blue](0,-1)--(0,0) (0,-2)--(0,-1)--(1,-1) (0,-3)--(0,-2)--(1,-2)--(1,-1) (0,-4)--(1,-4)--(1,-2)--(2,-2) (0,-5)--(1,-5)--(1,-4)--(1,-2)--(2,-2) (0,-6)--(2,-6)--(2,-3)--(3,-3) (0,-7)--(3,-7)--(3,-3)
    (0,-8)--(0,-7)--(3,-7)--(3,-4)--(4,-4)
    (0,-9)--(3,-9)--(3,-8)--(4,-8)--(4,-4);
    \draw[line width=1pt, rounded corners] (0,0)--(0,-9)  (0,0)--(9,-9) ;
    \draw[line width=1pt, rounded corners, color=red] (4,-9)--(9,-4);
    \draw[line width=1pt, rounded corners, color=green] (3,-9)--(9,-3);
    \node[anchor=west, color=red] at (9,-4) {$y=x+l-r-2$};
    \node[anchor=west, color=green] at (9,-3) {$y=x+l-r-1$};
    \node[anchor=east] at (0,-1) {$\Tilde{S}_1$};
    \node[anchor=east] at (0,-2) {$\Tilde{S}_2$};
    \node[anchor=east] at (0,-3) {$\Tilde{S}_3$};
    \node[anchor=east] at (0,-4) {$\Tilde{S}_4$};
    \node[anchor=east] at (0,-5) {$\Tilde{S}_5$};
    \node[anchor=east] at (0,-6) {$\Tilde{S}_6$};
    \node[anchor=east] at (0,-7) {$\Tilde{S}_7$};
    \node[anchor=east] at (0,-8) {$\Tilde{S}_8$};
    \node[anchor=east] at (0,-9) {$\Tilde{S}_9$};
    \end{tikzpicture}
\end{center} 

To translate this into Gelfand-Tsetlin-patterns we introduce a coordinate lattice on a Gelfand-Tsetlin-pattern of order $n$ in the following way.
\begin{center}
        \begin{tikzpicture}[scale=0.8]
         \draw[line width=1pt, color=gray!50] (6.5,0.5)--(12.5,-5.5) (5.5,-0.5)--(10.5,-5.5) (4.5,-1.5)--(8.5,-5.5) (3.5,-2.5)--(6.5,-5.5) (2.5,-3.5)--(4.5,-5.5) (8.5,0.5)--(2.5,-5.5) (9.5,-0.5)--(4.5,-5.5) (10.5,-1.5)--(6.5,-5.5) (11.5,-2.5)--(8.5,-5.5) (12.5,-3.5)--(10.5,-5.5);
          \node at (6.5,-0.5) {$a_{1,1}$};
          
          \node at (5.5,-1.5) {$a_{2,1}$};
          \node at (7.5,-1.5) {$a_{2,1}$};
          
          \node at (6.5,-2.5) {$...$};
          \node at (8.5,-2.5) {$...$};
          \node at (4.5,-2.5) {$...$};
          
          \node at (5.5,-3.5) {$...$};
          \node at (7.5,-3.5) {$a_{n-1,2}$};
          \node at (9.5,-3.5) {$a_{n-1,1}$};
          \node at (3.5,-3.5) {$a_{n-1,n-1}$};
          
          \node at (6.5,-4.5) {$...$};
          \node at (8.5,-4.5) {$a_{n,2}$};
          \node at (4.5,-4.5) {$...$};
          \node at (10.5,-4.5) {$a_{n,1}$};
          \node at (2.5,-4.5) {$a_{n,n}$};
          
          \node[color=blue] at (8.5,-0.5) {$\Hat{S}_{n-1}$};
          \node[color=blue] at (9.5,-1.5) {$\Hat{S}_{n-2}$};
          \node[color=blue] at (11.5,-3.5) {$\Hat{S}_{1}$};
          \fill[color=blue](7.5,-0.5) circle (3pt);
          \fill[color=blue](8.5,-1.5) circle (3pt);
          \fill[color=blue](10.5,-3.5) circle (3pt);
          
          \node[color=red] at (11.5,-5.5) {$\Hat{E}_{0}$};
          \node[color=red] at (9.5,-5.5) {$\Hat{E}_{1}$};
          \node[color=red] at (7.5,-5.5) {$\Hat{E}_{2}$};
          \node[color=red] at (5.5,-5.5) {$\Hat{E}_{r+2-n}$};
          \node[color=red] at (3.5,-5.5) {$\Hat{E}_{n-1}$};
          \fill[color=red](3.5,-4.5) circle (3pt);
          \fill[color=red](5.5,-4.5) circle (3pt);
          \fill[color=red](7.5,-4.5) circle (3pt);
          \fill[color=red](9.5,-4.5) circle (3pt);
          \fill[color=red](11.5,-4.5) circle (3pt);
    \end{tikzpicture}
\end{center}

Next we take a tuple $\Tilde{T}\in\Tilde{\mathcal{P}}_n^{l,r}$ and construct bijectively a Gelfand-Tsetlin-pattern $G(\Tilde{T})$ of order $n$. See below for an example. For this we take the path $\Tilde{P}_j$ starting at $\Tilde{S}_j$ and ending at say $\Tilde{E}_a$, and construct out of it a path $\Hat{P}_j$ starting at $\Hat{S}_j$ and ending at $\Hat{E}_a$ in the coordinate system above simply by making a south-east step in $\Hat{P}_j$ whenever $\Tilde{P}_j$ makes a north-step and a south-west step in $\Hat{P}_j$ whenever $\Tilde{P}_j$ makes an east-step. Then we fill in the number $j$, starting with $1$ all the way up to $n-1$, in all slots to the left of the path starting at $\Hat{S}_{n-j}$. The remaining empty slots will be filled with $n$. This process is clearly reversible and thus a bijection. See for example the tuple of paths $\Tilde{T}\in\Tilde{\mathcal{P}}_{10}^{1,12}$ from above and its corresponding Gelfand-Tsetlin-pattern below.
\begin{center}
\begin{tikzpicture}[scale=0.5]
    \draw[step=1 cm, color=gray!50, very thin] (0, 0) grid (9,-9);
    \draw[line width=3pt, rounded corners, color=blue](0,-1)--(0,0) (0,-2)--(0,-1)--(1,-1) (0,-3)--(0,-2)--(1,-2)--(1,-1) (0,-4)--(1,-4)--(1,-2)--(2,-2) (0,-5)--(1,-5)--(1,-4)--(1,-2)--(2,-2) (0,-6)--(2,-6)--(2,-3)--(3,-3) (0,-7)--(3,-7)--(3,-3)
    (0,-8)--(0,-7)--(3,-7)--(3,-4)--(4,-4)
    (0,-9)--(3,-9)--(3,-8)--(4,-8)--(4,-4);
    \draw[line width=1pt, rounded corners] (0,0)--(0,-9)  (0,0)--(9,-9) ;
    \node[anchor=east] at (0,-1) {$\Tilde{S}_1$};
    \node[anchor=east] at (0,-2) {$\Tilde{S}_2$};
    \node[anchor=east] at (0,-3) {$\Tilde{S}_3$};
    \node[anchor=east] at (0,-4) {$\Tilde{S}_4$};
    \node[anchor=east] at (0,-5) {$\Tilde{S}_5$};
    \node[anchor=east] at (0,-6) {$\Tilde{S}_6$};
    \node[anchor=east] at (0,-7) {$\Tilde{S}_7$};
    \node[anchor=east] at (0,-8) {$\Tilde{S}_8$};
    \node[anchor=east] at (0,-9) {$\Tilde{S}_9$};
    \end{tikzpicture}
    \begin{tikzpicture}[scale=0.5]
        \draw[line width=2pt, rounded corners, color=blue](15.5,-8.5)--(16.5,-9.5) (14.5,-7.5)--(15.5,-8.5)--(14.5,-9.5) 
        (13.5,-6.5)--(14.5,-7.5)--(13.5,-8.5)--(14.5,-9.5)
        (12.5,-5.5)--(11.5,-6.5)--(13.5,-8.5)--(12.5,-9.5)
        (11.5,-4.5)--(10.5,-5.5)--(13.5,-8.5)--(12.5,-9.5)
        (10.5,-3.5)--(8.5,-5.5)--(11.5,-8.5)--(10.5,-9.5)
        (9.5,-2.5)--(6.5,-5.5)--(10.5,-9.5)
        (8.5,-1.5)--(9.5,-2.5)--(6.5,-5.5)--(9.5,-8.5)--(8.5,-9.5)
        (7.5,-0.5)--(4.5,-3.5)--(5.5,-4.5)--(4.5,-5.5)--(8.5,-9.5);
          \node at (6.5,-0.5) {$1$};
          
          \node at (5.5,-1.5) {$1$};
          \node at (7.5,-1.5) {$2$};
          
          \node at (6.5,-2.5) {$2$};
          \node at (8.5,-2.5) {$2$};
          \node at (4.5,-2.5) {$1$};
          
          \node at (5.5,-3.5) {$2$};
          \node at (7.5,-3.5) {$2$};
          \node at (9.5,-3.5) {$4$};
          \node at (3.5,-3.5) {$1$};
          
          \node at (6.5,-4.5) {$2$};
          \node at (8.5,-4.5) {$4$};
          \node at (4.5,-4.5) {$1$};
          \node at (10.5,-4.5) {$5$};
          \node at (2.5,-4.5) {$1$};
          
          \node at (1.5,-5.5) {$1$};
          \node at (3.5,-5.5) {$1$};
          \node at (5.5,-5.5) {$2$};
          \node at (7.5,-5.5) {$4$};
          \node at (9.5,-5.5) {$5$};
          \node at (11.5,-5.5) {$6$};
          
          \node at (0.5,-6.5) {$1$};
          \node at (2.5,-6.5) {$1$};
          \node at (4.5,-6.5) {$1$};
          \node at (6.5,-6.5) {$2$};
          \node at (8.5,-6.5) {$4$};
          \node at (10.5,-6.5) {$5$};
          \node at (12.5,-6.5) {$7$};
          
          \node at (-0.5,-7.5) {$1$};
          \node at (1.5,-7.5) {$1$};
          \node at (3.5,-7.5) {$1$};
          \node at (5.5,-7.5) {$1$};
          \node at (7.5,-7.5) {$2$};
          \node at (9.5,-7.5) {$4$};
          \node at (11.5,-7.5) {$5$};
          \node at (13.5,-7.5) {$7$};
          
          \node at (-1.5,-8.5) {$1$};
          \node at (0.5,-8.5) {$1$};
          \node at (2.5,-8.5) {$1$};
          \node at (4.5,-8.5) {$1$};
          \node at (6.5,-8.5) {$1$};
          \node at (8.5,-8.5) {$2$};
          \node at (10.5,-8.5) {$4$};
          \node at (12.5,-8.5) {$5$};
          \node at (14.5,-8.5) {$8$};

          \node at (-2.5,-9.5) {$1$};
          \node at (-0.5,-9.5) {$1$};
          \node at (1.5,-9.5) {$1$};
          \node at (3.5,-9.5) {$1$};
          \node at (5.5,-9.5) {$1$};
          \node at (7.5,-9.5) {$1$};
          \node at (9.5,-9.5) {$3$};
          \node at (11.5,-9.5) {$5$};
          \node at (13.5,-9.5) {$7$};
          \node at (15.5,-9.5) {$9$};
          
          \node at (-1.5,-11.5) {$18$};
          \node at (0.5,-11.5) {$16$};
          \node at (2.5,-11.5) {$14$};
          \node at (4.5,-11.5) {$12$};
          \node at (6.5,-11.5) {$10$};
          \node at (8.5,-11.5) {$8$};
          \node at (10.5,-11.5) {$6$};
          \node at (12.5,-11.5) {$4$};
          \node at (14.5,-11.5) {$2$};
          \node at (-2.5,-11.5) {$19$};
          \node at (-0.5,-11.5) {$17$};
          \node at (1.5,-11.5) {$15$};
          \node at (3.5,-11.5) {$13$};
          \node at (5.5,-11.5) {$11$};
          \node at (7.5,-11.5) {$9$};
          \node at (9.5,-11.5) {$7$};
          \node at (11.5,-11.5) {$5$};
          \node at (13.5,-11.5) {$3$};
          \node at (15.5,-11.5) {$1$};
          
          \node[color=blue] at (8.5,-0.5) {$\Hat{S}_{9}$};
          \node[color=blue] at (9.5,-1.5) {$\Hat{S}_{8}$};
          \node[color=blue] at (10.5,-2.5) {$\Hat{S}_{7}$};
          \node[color=blue] at (11.5,-3.5) {$\Hat{S}_{6}$};
          \node[color=blue] at (12.5,-4.5) {$\Hat{S}_{5}$};
          \node[color=blue] at (13.5,-5.5) {$\Hat{S}_{4}$};
          \node[color=blue] at (14.5,-6.5) {$\Hat{S}_{3}$};
          \node[color=blue] at (15.5,-7.5) {$\Hat{S}_{2}$};
          \node[color=blue] at (16.5,-8.5) {$\Hat{S}_{1}$};
          \fill[color=blue](7.5,-0.5) circle (3pt);
          \fill[color=blue](8.5,-1.5) circle (3pt);
          \fill[color=blue](9.5,-2.5) circle (3pt);
          \fill[color=blue](10.5,-3.5) circle (3pt);
          \fill[color=blue](11.5,-4.5) circle (3pt);
          \fill[color=blue](12.5,-5.5) circle (3pt);
          \fill[color=blue](13.5,-6.5) circle (3pt);
          \fill[color=blue](14.5,-7.5) circle (3pt);
          \fill[color=blue](15.5,-8.5) circle (3pt);
          
          \node[color=red] at (16.5,-10.5) {$\Hat{E}_{0}$};
          \node[color=red] at (14.5,-10.5) {$\Hat{E}_{1}$};
          \node[color=red] at (12.5,-10.5) {$\Hat{E}_{2}$};
          \node[color=red] at (10.5,-10.5) {$\Hat{E}_{3}$};
          \node[color=red] at (8.5,-10.5) {$\Hat{E}_{4}$};
          \node[color=red] at (6.5,-10.5) {$\Hat{E}_{5}$};
          \node[color=red] at (4.5,-10.5) {$\Hat{E}_{6}$};
          \node[color=red] at (2.5,-10.5) {$\Hat{E}_{7}$};
          \node[color=red] at (0.5,-10.5) {$\Hat{E}_{8}$};
          \node[color=red] at (-1.5,-10.5) {$\Hat{E}_{9}$};
          \fill[color=red](0.5,-9.5) circle (3pt);
          \fill[color=red](2.5,-9.5) circle (3pt);
          \fill[color=red](4.5,-9.5) circle (3pt);
          \fill[color=red](6.5,-9.5) circle (3pt);
          \fill[color=red](8.5,-9.5) circle (3pt);
          \fill[color=red](10.5,-9.5) circle (3pt);
          \fill[color=red](12.5,-9.5) circle (3pt);
          \fill[color=red](14.5,-9.5) circle (3pt);
          \fill[color=red](16.5,-9.5) circle (3pt);
          \fill[color=red](-1.5,-9.5) circle (3pt);
    \end{tikzpicture}
\end{center}

Now we have to translate our conditions on the tuples $\Tilde{T}\in\Tilde{\mathcal{P}}_n^{l,r}$. Recall that $\Tilde{P}_j$ is the set of all $(n-1)$-tuples of kissing lattice paths $\Tilde{P}_j$ consisting of north- and east-steps in the integer lattice with
\begin{itemize}
    \item starting points $\Tilde{S}_1=(0,-1),...,\Tilde{S}_{n-1}=(0,-n+1)$,
    \item end-points in $\Tilde{E}_0=(0,0),...,\Tilde{E}_{r-n+2}(r-n+2,n-r-2)$,
    \item the path $\Tilde{P}_{n-1}$ starting at $\Tilde{S}_{n-1}$ staying weakly above the line $y=x+l-r-1$,
\end{itemize}
where a path is weighted by $t$ if and only if it ends with a north-step. The second condition simply translates into the fact that the paths in $G(\Tilde{T})$ end at the points $\Hat{E}_0,...,\Hat{E}_{r+1-(n-1)}$. This means that to the left of the north-west diagonal through the point $\Hat{E}_{r+2-n}$ all entries of $G(\Tilde{T})$ equal to $1$. In other words all entries to the left and on the $(r+3-n)$-th north-west diagonal of the Gelfand-Tsetlin-pattern are equal to $1$.

Note that the intersection point of the line $y=-x$ and $y=x+l-r-1$ is given by $(\frac{r+1-l}{2},-\frac{r+1-l}{2})$ and because south-west to north-east steps in $\Tilde{T}$ translate to simple south steps in $G(\Tilde{T})$, the fact that $\Tilde{P}_{n-1}$ stays weakly above the line $y=x+l-r-1$, translates into the fact that the path $\Hat{P}_{n-1}$ stays weakly to the right of the $(r+1-l)$-th column of the Gelfand-Tsetlin-pattern. See for example below the tuple of paths $\Tilde{T}\in\Tilde{\mathcal{P}}_{10}^{1,12}$ from above and its corresponding Gelfand-Tsetlin-pattern with the $(r+1-l)$-th column marked.
\begin{center}
    \begin{tikzpicture}[scale=0.4]
    \draw[step=1 cm, color=gray!50, very thin] (0, 0) grid (9,-9);
    \draw[line width=3pt, rounded corners, color=blue](0,-1)--(0,0) (0,-2)--(0,-1)--(1,-1) (0,-3)--(0,-2)--(1,-2)--(1,-1) (0,-4)--(1,-4)--(1,-2)--(2,-2) (0,-5)--(1,-5)--(1,-4)--(1,-2)--(2,-2) (0,-6)--(2,-6)--(2,-3)--(3,-3) (0,-7)--(3,-7)--(3,-3)
    (0,-8)--(0,-7)--(3,-7)--(3,-4)--(4,-4)
    (0,-9)--(3,-9)--(3,-8)--(4,-8)--(4,-4);
    \draw[line width=1pt, rounded corners] (0,0)--(0,-9)  (0,0)--(9,-9) ;
    \draw[line width=1pt, rounded corners, color=red] (4,-9)--(9,-4);
    \draw[line width=1pt, rounded corners, color=green] (3,-9)--(9,-3);
    \node[anchor=west, color=red] at (9,-4) {$y=x+l-r-2$};
    \node[anchor=west, color=green] at (9,-3) {$y=x+l-r-1$};
    \node[anchor=east] at (0,-1) {$\Tilde{S}_1$};
    \node[anchor=east] at (0,-2) {$\Tilde{S}_2$};
    \node[anchor=east] at (0,-3) {$\Tilde{S}_3$};
    \node[anchor=east] at (0,-4) {$\Tilde{S}_4$};
    \node[anchor=east] at (0,-5) {$\Tilde{S}_5$};
    \node[anchor=east] at (0,-6) {$\Tilde{S}_6$};
    \node[anchor=east] at (0,-7) {$\Tilde{S}_7$};
    \node[anchor=east] at (0,-8) {$\Tilde{S}_8$};
    \node[anchor=east] at (0,-9) {$\Tilde{S}_9$};
    \end{tikzpicture}
    \begin{tikzpicture}[scale=0.5]
        \draw[line width=2pt, rounded corners, color=blue](15.5,-8.5)--(16.5,-9.5) (14.5,-7.5)--(15.5,-8.5)--(14.5,-9.5) 
        (13.5,-6.5)--(14.5,-7.5)--(13.5,-8.5)--(14.5,-9.5)
        (12.5,-5.5)--(11.5,-6.5)--(13.5,-8.5)--(12.5,-9.5)
        (11.5,-4.5)--(10.5,-5.5)--(13.5,-8.5)--(12.5,-9.5)
        (10.5,-3.5)--(8.5,-5.5)--(11.5,-8.5)--(10.5,-9.5)
        (9.5,-2.5)--(6.5,-5.5)--(10.5,-9.5)
        (8.5,-1.5)--(9.5,-2.5)--(6.5,-5.5)--(9.5,-8.5)--(8.5,-9.5)
        (7.5,-0.5)--(4.5,-3.5)--(5.5,-4.5)--(4.5,-5.5)--(8.5,-9.5);
        \draw[line width=1pt, rounded corners, color=green] (4,-0)--(5,0)--(5,-12)--(4,-12)--cycle ;
          \node at (6.5,-0.5) {$1$};
          
          \node at (5.5,-1.5) {$1$};
          \node at (7.5,-1.5) {$2$};
          
          \node at (6.5,-2.5) {$2$};
          \node at (8.5,-2.5) {$2$};
          \node at (4.5,-2.5) {$1$};
          
          \node at (5.5,-3.5) {$2$};
          \node at (7.5,-3.5) {$2$};
          \node at (9.5,-3.5) {$4$};
          \node at (3.5,-3.5) {$1$};
          
          \node at (6.5,-4.5) {$2$};
          \node at (8.5,-4.5) {$4$};
          \node at (4.5,-4.5) {$1$};
          \node at (10.5,-4.5) {$5$};
          \node at (2.5,-4.5) {$1$};
          
          \node at (1.5,-5.5) {$1$};
          \node at (3.5,-5.5) {$1$};
          \node at (5.5,-5.5) {$2$};
          \node at (7.5,-5.5) {$4$};
          \node at (9.5,-5.5) {$5$};
          \node at (11.5,-5.5) {$6$};
          
          \node at (0.5,-6.5) {$1$};
          \node at (2.5,-6.5) {$1$};
          \node at (4.5,-6.5) {$1$};
          \node at (6.5,-6.5) {$2$};
          \node at (8.5,-6.5) {$4$};
          \node at (10.5,-6.5) {$5$};
          \node at (12.5,-6.5) {$7$};
          
          \node at (-0.5,-7.5) {$1$};
          \node at (1.5,-7.5) {$1$};
          \node at (3.5,-7.5) {$1$};
          \node at (5.5,-7.5) {$1$};
          \node at (7.5,-7.5) {$2$};
          \node at (9.5,-7.5) {$4$};
          \node at (11.5,-7.5) {$5$};
          \node at (13.5,-7.5) {$7$};
          
          \node at (-1.5,-8.5) {$1$};
          \node at (0.5,-8.5) {$1$};
          \node at (2.5,-8.5) {$1$};
          \node at (4.5,-8.5) {$1$};
          \node at (6.5,-8.5) {$1$};
          \node at (8.5,-8.5) {$2$};
          \node at (10.5,-8.5) {$4$};
          \node at (12.5,-8.5) {$5$};
          \node at (14.5,-8.5) {$8$};

          \node at (-2.5,-9.5) {$1$};
          \node at (-0.5,-9.5) {$1$};
          \node at (1.5,-9.5) {$1$};
          \node at (3.5,-9.5) {$1$};
          \node at (5.5,-9.5) {$1$};
          \node at (7.5,-9.5) {$1$};
          \node at (9.5,-9.5) {$3$};
          \node at (11.5,-9.5) {$5$};
          \node at (13.5,-9.5) {$7$};
          \node at (15.5,-9.5) {$9$};
          
          \node at (-1.5,-11.5) {$18$};
          \node at (0.5,-11.5) {$16$};
          \node at (2.5,-11.5) {$14$};
          \node at (4.5,-11.5) {$12$};
          \node at (6.5,-11.5) {$10$};
          \node at (8.5,-11.5) {$8$};
          \node at (10.5,-11.5) {$6$};
          \node at (12.5,-11.5) {$4$};
          \node at (14.5,-11.5) {$2$};
          \node at (-2.5,-11.5) {$19$};
          \node at (-0.5,-11.5) {$17$};
          \node at (1.5,-11.5) {$15$};
          \node at (3.5,-11.5) {$13$};
          \node at (5.5,-11.5) {$11$};
          \node at (7.5,-11.5) {$9$};
          \node at (9.5,-11.5) {$7$};
          \node at (11.5,-11.5) {$5$};
          \node at (13.5,-11.5) {$3$};
          \node at (15.5,-11.5) {$1$};
          
          \node[color=blue] at (8.5,-0.5) {$\Hat{S}_{9}$};
          \node[color=blue] at (9.5,-1.5) {$\Hat{S}_{8}$};
          \node[color=blue] at (10.5,-2.5) {$\Hat{S}_{7}$};
          \node[color=blue] at (11.5,-3.5) {$\Hat{S}_{6}$};
          \node[color=blue] at (12.5,-4.5) {$\Hat{S}_{5}$};
          \node[color=blue] at (13.5,-5.5) {$\Hat{S}_{4}$};
          \node[color=blue] at (14.5,-6.5) {$\Hat{S}_{3}$};
          \node[color=blue] at (15.5,-7.5) {$\Hat{S}_{2}$};
          \node[color=blue] at (16.5,-8.5) {$\Hat{S}_{1}$};
          \fill[color=blue](7.5,-0.5) circle (3pt);
          \fill[color=blue](8.5,-1.5) circle (3pt);
          \fill[color=blue](9.5,-2.5) circle (3pt);
          \fill[color=blue](10.5,-3.5) circle (3pt);
          \fill[color=blue](11.5,-4.5) circle (3pt);
          \fill[color=blue](12.5,-5.5) circle (3pt);
          \fill[color=blue](13.5,-6.5) circle (3pt);
          \fill[color=blue](14.5,-7.5) circle (3pt);
          \fill[color=blue](15.5,-8.5) circle (3pt);
          
          \node[color=red] at (16.5,-10.5) {$\Hat{E}_{0}$};
          \node[color=red] at (14.5,-10.5) {$\Hat{E}_{1}$};
          \node[color=red] at (12.5,-10.5) {$\Hat{E}_{2}$};
          \node[color=red] at (10.5,-10.5) {$\Hat{E}_{3}$};
          \node[color=red] at (8.5,-10.5) {$\Hat{E}_{4}$};
          \node[color=red] at (6.5,-10.5) {$\Hat{E}_{5}$};
          \node[color=red] at (4.5,-10.5) {$\Hat{E}_{6}$};
          \node[color=red] at (2.5,-10.5) {$\Hat{E}_{7}$};
          \node[color=red] at (0.5,-10.5) {$\Hat{E}_{8}$};
          \node[color=red] at (-1.5,-10.5) {$\Hat{E}_{9}$};
          \fill[color=red](0.5,-9.5) circle (3pt);
          \fill[color=red](2.5,-9.5) circle (3pt);
          \fill[color=red](4.5,-9.5) circle (3pt);
          \fill[color=red](6.5,-9.5) circle (3pt);
          \fill[color=red](8.5,-9.5) circle (3pt);
          \fill[color=red](10.5,-9.5) circle (3pt);
          \fill[color=red](12.5,-9.5) circle (3pt);
          \fill[color=red](14.5,-9.5) circle (3pt);
          \fill[color=red](16.5,-9.5) circle (3pt);
          \fill[color=red](-1.5,-9.5) circle (3pt);
    \end{tikzpicture}
\end{center}

Since a path is weighted by $t$ in $\Tilde{T}$ if and only if it ends with a north step, the Gelfand-Tsetlin pattern $G(\Tilde{T})$ is weighted by $$t^{n-1}\prod_{i=1}^{r+2-n}t^{a_{n-1,i}-a_{n,i}}.$$ In order to see this, first observe that $\prod_{i=1}^{r+2-n}t^{a_{n,i}-a_{n-1,i}}$ counts the paths in $G(\Tilde{T})$ that end with a south-west step, because in any path ending with a south-west step one creates a difference of $1$ between the entry in the bottom column to the right of the end-point of the path and the entry directly above it along the north-west diagonal, when filling in the numbers between the paths as in the construction. Since there are $n-1$ paths, the statement follows.  

We see that the set $\mathcal{P}_n^{l,r}$ is in weight-preserving bijection with the set $\mathcal{G}_n^{l,r}$ of all Gelfand-Tsetlin-patterns $G$ of order $n$ where
\begin{itemize}
    \item all entries to the left and on the $(r+3-n)$-th north-west diagonal are equal to $1$,
    \item all entries to the left and on the $(r+1-l)$-th column are equal to $1$,
\end{itemize}
and $G$ is weighted by $t^{n-1}\prod_{i=1}^{r+2-n}t^{a_{n-1,i}-a_{n,i}}$. Note that because $n-1\leq r\leq 2n-3$, we have $2\leq r+3-n\leq n$ and since $0\leq l\leq n-2$ we have $2\leq r+1-l\leq 2n-2$, hence both $r+3-n$ and $r+1-l$ are the index of an existing diagonal and column. Now simply using the rotation by $45^\circ$ bijection described in Definition \ref{DefAndBijGelfandTsetlinPattern} and cutting off all the entries forced to be equal to $1$, we obtain a weight preserving bijection between $\mathcal{P}_n$ and the set of all $(0,n,r+2-n,r-l)$ - Magog pentagons where a pentagon is weighted by $t^{n-1}\prod_{i=1}^{r+2-n}t^{a_{n-1,i}-a_{n,i}}$. Since in Theorem \ref{SumOfDeterminantsLRRestrictedASTFormula} the number of $(n,l,r)$ - ASPs $P$ with $\rho(P)=p$ is given by the coefficient of $t^{p-1}$, we multiply the statistic on the Magog pentagons with an additional $t$, giving us the $\tau$-statistic and the statement of the theorem. For the example $T\in\mathcal{P}_{10}^{1,12}$ above, we would obtain the $(0,10,4,11)$ - Magog pentagon $M$ below with weight $\tau(M)-1=4=\#\textrm{paths ending with a north step in the example }T$.
$$\begin{matrix}
1&2&2&4&5&6&7&7&8&9\\
1&2&2&4&5&5&5&5&7\\
&2&2&4&4&4&4&5\\
& &2&2&2&2&3\\
\end{matrix}$$
\end{proof}
\section{Proof of Theorem \ref{PfaffianCor}}
Again one of the main ingredients of the proof of theorem \ref{PfaffianCor} will be Theorem \ref{SumOfDeterminantsLRRestrictedASTFormula} that was established in Section \ref{SectionIntermediateResult}. The other ingredient is a result of Stembridge, which we will state in a form useful for our purposes. For this we define the \emph{Pfaffian} of a $2n\times 2n$-skew symmetric matrix. 
\begin{defin}
    Let $A=(a_{i,j})_{1\leq i<j\leq 2n}$ be a skew symmetric matrix. We define its \emph{Pfaffian} by 
    \begin{equation}
        \underset{1\leq i<j\leq 2n}{\mathrm{Pf}}(a_{i,j})\defeq\frac{1}{2^n n!}\sum_{\sigma\in S_{2n}}\Sign(\sigma)\prod_{i=1}^na_{\sigma(2i-1),\sigma(2i)},
    \end{equation}
    where $S_{2n}$ is the \emph{symmetric group} of order $2n!$ and $\Sign$ is the \emph{sign} of a permutation. 
\end{defin}
\begin{thm}[\cite{NonintersectingPathsPfaffiansAndPlanePartitions}, Theorem 3.1]
Let $G$ be an acyclic, directed graph, $u_1,...,u_r$ be an $r$-tuple of vertices of $G$, $I$ a totally ordered subset of the vertices of $G$ such that for all $i<j\in [r]$ and $v<v'\in I$ any directed path in $G$ from $u_j$ to $v$ intersects any directed path in $G$ from $u_i$ to $v'$. Denote by $$\mathcal{GF}_0[u_1,...,u_r;I\mid G]=\sum_{(P_1,...,P_r)}\prod_{i=1}^r\wt(P_i)$$
where we sum over all $r$-tuples of non-intersecting directed paths with starting points $u_1,...,u_r$ and end-points in $I$ inside $G$ and $\wt(P_i)$ is the weight of the path $P_i$.

If $r$ is even we have that
\begin{equation*}
    \mathcal{GF}_0[u_1,...,u_r;I\mid G]=\underset{1\leq i<j\leq r}{\mathrm{Pf}}\Big(\mathcal{GF}_0[u_i,u_j;I\mid G]\Big).
\end{equation*}
\end{thm}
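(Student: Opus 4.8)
The plan is to prove this as the Pfaffian analogue of the Lindström--Gessel--Viennot (LGV) Lemma, reducing it in three moves to the ordinary LGV Lemma already used in the paper. Write $h_i(v)$ for the generating function $\sum_{P\colon u_i\to v}\wt(P)$ of all directed paths in $G$ from $u_i$ to $v$. The first move is to express each entry of the Pfaffian through the $h_i$: for sources $u_i,u_j$ with $i<j$ and endpoints $v<v'$ in $I$, the hypothesis says that any pair in which $u_i$ reaches the larger endpoint $v'$ and $u_j$ the smaller endpoint $v$ must cross, so swapping the two tails at their last common vertex is a weight-preserving bijection between the crossing pairs that respect the source order and all pairs that reverse it. Hence the non-intersecting pairs counted by $\mathcal{GF}_0[u_i,u_j;I\mid G]$ are exactly
$$\mathcal{GF}_0[u_i,u_j;I\mid G]=\sum_{\substack{v,v'\in I\\ v<v'}}\Big(h_i(v)\,h_j(v')-h_i(v')\,h_j(v)\Big).$$

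The second move substitutes this into the Pfaffian and establishes the minor-summation identity
$$\underset{1\le i<j\le r}{\mathrm{Pf}}\Big(\mathcal{GF}_0[u_i,u_j;I\mid G]\Big)=\sum_{\substack{v_1<\cdots<v_r\\ v_1,\ldots,v_r\in I}}\det_{1\le i,a\le r}\big(h_i(v_a)\big).$$
The third move evaluates each $r\times r$ determinant by the ordinary LGV Lemma: $\det_{1\le i,a\le r}(h_i(v_a))$ is the generating function of non-intersecting $r$-tuples of paths from $u_1,\ldots,u_r$ to the endpoint set $\{v_1,\ldots,v_r\}$, the crossing families cancelling under the usual tail-swap and the intersection hypothesis forcing the surviving families to send $u_i$ to $v_i$. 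Summing over all increasing endpoint tuples $v_1<\cdots<v_r$ in $I$ then reassembles exactly $\mathcal{GF}_0[u_1,\ldots,u_r;I\mid G]$, which is the assertion.

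The main obstacle is the minor-summation identity of the second move, which carries the whole Pfaffian content. I would prove it by expanding both sides as signed sums over assignments $f\colon[r]\to I$ of an endpoint to each source, each weighted by $\prod_i h_i(f(i))$. On the Pfaffian side one expands over perfect matchings $\pi$ of $[r]$ (this is where $r$ even is used) and, for each block, over the two ordered ways of attaching its pair of endpoints, the block contributing the sign recorded in the first display; the task is to build a sign-reversing, weight-preserving involution that cancels every assignment $f$ in which two sources share an endpoint, and to check that for an injective $f$ the signed total over all compatible matchings and block-orderings collapses to the sign of the permutation sorting the images of $f$, which is precisely the coefficient of $\prod_i h_i(f(i))$ on the determinant side. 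The delicate points are the canonical choice locating the repeated endpoint to be resolved, so that the move is an involution, and the bookkeeping showing that the matching sign combines with the block signs to give the sorting sign; once these are in place the identity follows and, with the two LGV applications above, so does the theorem.
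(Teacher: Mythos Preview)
The paper does not prove this statement at all: it is quoted verbatim as Theorem~3.1 of Stembridge's paper \cite{NonintersectingPathsPfaffiansAndPlanePartitions} and used as a black box in the proof of Theorem~\ref{PfaffianCor}. So there is no ``paper's own proof'' to compare your proposal against.

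That said, your outline is a correct route to Stembridge's theorem. The identity you isolate in your second move,
\[
\underset{1\le i<j\le r}{\mathrm{Pf}}\Bigl(\sum_{v<v'}\bigl(h_i(v)h_j(v')-h_i(v')h_j(v)\bigr)\Bigr)=\sum_{v_1<\cdots<v_r}\det_{1\le i,a\le r}\bigl(h_i(v_a)\bigr),
\]
is precisely the Ishikawa--Wakayama minor-summation formula (equivalently, the Pfaffian Cauchy--Binet identity $\mathrm{Pf}(HJH^{T})=\sum_{S}\det H[\,\cdot\,;S]$ for the skew matrix $J$ with $1$'s above the diagonal), so you need not reprove it from scratch by the involution you sketch; citing it would close the argument immediately. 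Your first and third moves are the standard two-path and $r$-path LGV applications and are fine as stated. This route---LGV on each determinant plus minor summation to aggregate---is genuinely different from Stembridge's original proof, which builds a single sign-reversing involution directly on $r$-tuples of paths paired with perfect matchings, without passing through determinants at all. Your approach has the advantage of modularity (each piece is a named lemma), at the cost of importing the minor-summation formula; Stembridge's is self-contained but requires more careful sign bookkeeping in one pass.
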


Note that if $r$ is odd we may adjoin a phantom vertex $u_{r+1}$ to $G$ with no incident edges. We then include $u_{r+1}$ in $I$ and order all vertices of $I$ before it. Denote $I\cup \{u_{r+1}\}$ by $I^*$. Now $u_1,...,u_r,u_{r+1}$ and the new $I^*$ satisfy all conditions of the above statement and we can provide a Pfaffian of order $r+1$ for $\mathcal{GF}_0[u_1,...,u_r;I]=\mathcal{GF}_0[u_1,...,u_r,u_{r+1};I^*]$. Note that in this case for all $i\in[r]$ we have $\mathcal{GF}_0[u_i,u_{r+1};I]=\mathcal{GF}_0[u_i;I]$. \\ 

Fix $n\in\mathbb{N}$ and $0\leq l\leq n-2<r\leq 2n-3$ such that $l+r-2n+2<0$ and $\frac{2n+r-l}{3}>n-1$. Now consider the graph $G_{n,l,r}$ with vertex set consisting of all integer lattice points in the plane lying weakly above the line $y=x+l-r-2n+1$ and oriented edges given by north- and east-steps between those points. This graph is directed and clearly acyclic. Let $S_i\defeq(-i,-2i)$ as in Proposition \ref{PropRelatingSumOfDetsAndLatticePaths} and consider $I\defeq\{(1,-1),...,(r+1,-r-1)\}$. It is clear by elementary geometric arguments, see \hyperref[Figure1]{Figure 1}, that these vertices satisfy the conditions of Stembridge's Theorem. We assign a weight $t$ to a path in $G_{n,l,r}$ if and only if it ends with a north step. By Proposition \ref{PropRelatingSumOfDetsAndLatticePaths}, we know that 
\begin{multline*}
         \mathcal{GF}_0[S_1,...,S_{n-1};I\mid G_{n,l,r}]=\sum_{1\leq e_1<e_2<...<e_{n-1}\leq r+1}\det_{1\leq i,j\leq n-1}\bigg(t\binom{j-1}{e_i-j}\\
         -t\binom{j-1}{r-e_i-l+2n-1-j}
        +\binom{j-1}{e_i-1-j}-\binom{j-1}{r-e_i-l+2n-j}\bigg).
\end{multline*}
Furthermore we can, similarly as in the proof of Proposition \ref{PropRelatingSumOfDetsAndLatticePaths}, deduce that for $i,j\in[n-1]$ we have
\begin{multline*}
    \mathcal{GF}_0[S_i,S_{j};I\mid G_{n,l,r}]\\
    = \sum_{1\leq e_1<e_2\leq r+1}\det_{1\leq c\leq 2}\begin{pmatrix}t\binom{j-1}{e_c-j}-t\binom{j-1}{r-e_c-l+2n-1-j}+\binom{j-1}{e_c-1-j}-\binom{j-1}{r-e_c-l+2n-j}\\t\binom{i-1}{e_c-i}-t\binom{i-1}{r-e_c-l+2n-1-i}+\binom{i-1}{e_c-1-i}-\binom{i-1}{r-e_c-l+2n-i}\end{pmatrix},
\end{multline*}
where we denote by $\det_{1\leq i\leq n}(v_i)$ the determinant of the matrix with column vectors $v_1,..,v_n$. 
Note that for $j\in[n-1]$ we have
\begin{multline*}
    \mathcal{GF}_0[S_{j};I\mid G_{n,l,r}]=\sum_{1\leq e_1\leq r+1}\Bigg( t\binom{j-1}{e_1-j}
    -t\binom{j-1}{r-e_1-l+2n-1-j}\\
    +\binom{j-1}{e_1-1-j}-\binom{j-1}{r-e_1-l+2n-j}\Bigg),
\end{multline*}
by a similar reasoning. Therefore by Stembridge's Theorem we can deduce that for odd $n$ we have that
\begin{multline*}
         \sum_{1\leq e_1<e_2<...<e_{n-1}\leq r+1}\det_{1\leq i,j\leq n-1}\bigg(t\binom{j-1}{e_i-j}-t\binom{j-1}{r-e_i-l+2n-1-j}
        +\binom{j-1}{e_i-1-j}\\
        -\binom{j-1}{r-e_i-l+2n-j}\bigg)\\
        =\mathcal{GF}_0[S_1,...,S_{n-1};I\mid G_{n,l,r}]
        =\underset{1\leq i<j\leq n-1}{\mathrm{Pf}}\Big(\mathcal{GF}_0[S_i,S_{j};I\mid G_{n,l,r}]\big)\\
        =\underset{1\leq i<j\leq n-1}{\mathrm{Pf}}\Bigg(\sum_{1\leq e_1<e_2\leq r+1}\det_{1\leq c\leq 2}\begin{pmatrix}t\binom{j-1}{e_c-j}-t\binom{j-1}{r-e_c-l+2n-1-j}+\binom{j-1}{e_c-1-j}-\binom{j-1}{r-e_c-l+2n-j}\\t\binom{i-1}{e_c-i}-t\binom{i-1}{r-e_c-l+2n-1-i}+\binom{i-1}{e_c-1-i}-\binom{i-1}{r-e_c-l+2n-i}\end{pmatrix}\Bigg)\\
        =\pfaf{n-1}\Bigg(\sum_{1\leq e_1<e_2\leq r+1}\det_{1\leq i,j\leq 2}\Big(\scalebox{0.9}{$t\binom{k_j-1}{e_i-k_j}-t\binom{k_j-1}{r-e_i-l+2n-1-k_j}+\binom{k_j-1}{e_i-1-k_j}-\binom{k_j-1}{r-e_i-l+2n-k_j}$}\Big)\Bigg).
\end{multline*}
The above equality still holds if $n$ is even, but by Stembridge's Theorem we have to append an $n$-th column to the Pfaffian with entries 
\begin{multline*}
    a_{j,n}\defeq \mathcal{GF}_0[S_{j};I\mid G_{n,l,r}]=
    \sum_{1\leq e_1\leq r+1}\Bigg( t\binom{j-1}{e_1-j}
    -t\binom{j-1}{r-e_1-l+2n-1-j}
    \\+\binom{j-1}{e_1-1-j}-\binom{j-1}{r-e_1-l+2n-j}\Bigg).
\end{multline*}
Now Theorem \ref{PfaffianCor} follows from Theorem \ref{SumOfDeterminantsLRRestrictedASTFormula} and Theorem \ref{RelationASTsMagogPentagons}.
\section{Consequences and Conjectures}
As mentioned in Section \ref{SectionPrerequisitesAndMainResults}, the definition of $(n,l,r)$ - ASPs is inspired by the conjectures of Behrend in \cite{FischerEnumerationOfAlternatingSignTrianglesUsingAConstantTermApproach}. The main goal of the paper was to relate the AST side to the Magog-side, which we managed to accomplish within Theorem \ref{RelationASTsMagogPentagons}. This gives us a new point of view on some open conjectures connected to ASPs and \textit{Gog pentagons} as well as Magog pentagons. In this chapter we will discuss how exactly our results interact or extend those aforementioned conjectures and while the full conjecture of Behrend remains open, how we got a part of it as a byproduct of our investigations.

To formulate the precise results we obtained, we will need the following three statistics for an ASM $M$ of order $n$.
\begin{itemize}
    \item $T_L(M)$ the number of south-west diagonals, counted starting in the top left corner, consisting of all zeros,
    \item $T_R(M)$ the number of south-east diagonals, counted starting in the top right corner, consisting of all zeros,
    \item $\rho(M)$ the position of the unique $1$ in the top row.
\end{itemize}

Let us also formally introduce Gog trapezoids. 
\begin{defin}
    An $(m,n,k)$-\emph{Gog trapezoid} is an array of positive integers consisting of the first $k$ columns of an array
    $$\begin{matrix}
        a_{1,1}&a_{1,2}&...&...&a_{1,n}\\
        a_{2,1}&a_{2,2}&...&a_{2,n-1}&\\
        ...&...&...& &\\
        a_{n-1,1}&a_{n-1,2}& & &\\
        a_{n,1}& & & &
    \end{matrix}$$
    such that entries along rows are strictly increasing, entries along columns are weakly increasing, and entries along diagonals from lower-left to upper-right are weakly increasing, and such that the entries in the right most column are bounded by $a_{1,k}\leq m+k, a_{2,k}\leq m+k+1,...,a_{n+1-k,k}\leq m+n$.
\end{defin}
It is not hard to see that we can transform any $n\times n$ - ASM one to one into a $(0,n,n)$ - Gog trapezoid by first mapping the ASM to the matrix obtained by replacing the $i$-th row of the ASM by the sum of rows $i,i+1,..,n$ and then recording the positions of the $1$'s in the so obtained matrix in a triangular array. 
\begin{equation*}
    \begin{pmatrix}
        0&0&1&0&0&0\\
        1&0&-1&1&0&0\\
        0&0&1&-1&0&1\\
        0&1&-1&1&0&0\\
        0&0&1&-1&1&0\\
        0&0&0&1&0&0
    \end{pmatrix}\quad\rightarrow\quad
    \begin{pmatrix}
        1&1&1&1&1&1\\
        1&1&0&1&1&1\\
        0&1&1&0&1&1\\
        0&1&0&1&1&0\\
        0&0&1&0&1&0\\
        0&0&0&1&0&0
    \end{pmatrix}\quad\rightarrow\quad
    \begin{matrix}
        1&2&3&4&5&6\\
        1&2&4&5&6\\
        2&3&5&6\\
        2&4&5\\
        3&5\\
        4
    \end{matrix}
\end{equation*}
Note that under this bijection the statistics $T_L(M)$ and $T_R(M)$ of an ASM $M$ translate into the number of all $1$ south-west diagonals, counted starting in the top left corner, and the number of all $1$ south-east diagonals, counted starting in the top right corner, of the in-between matrix respectively. These then translate into fixed pattern triangles of the Gog trapezoid. More precisely if $T_L(M)=l$ then we have the pattern 
\begin{equation*}
    \begin{matrix}
    1&2&...&l+1\\
    1&...&l&\\
    \vdots&\vdots\\
    1
    \end{matrix}
\end{equation*} 
in the top left corner of the Gog trapezoid and if $T_R(M)=r$ then we have the pattern
\begin{equation*}
    \begin{matrix}
    n-r&n+1-r&...&n\\
    n+1-r&...&n&\\
    \vdots&\vdots\\
    n
    \end{matrix}
\end{equation*}
in the top right corner of the Gog trapezoid. In particular this tells us that the number of $n\times n$ - ASM with $T_R(M)=r$ is equal to the number of $(0,n,n-1-r)$ - Gog trapezoids. 
The following statement is a simple corollary of Theorem \ref{RelationASTsMagogPentagons}.
\begin{cor}\label{ASTsAndMagogTrapezoids}
Let $3<n\in\mathbb{N}$, $1\leq p\leq n$ and $n-1\leq r\leq 2n-3$. The number of $(n,0,r)$ - ASPs $T$ of order $n$ with $\rho(T)=p$ is equal to the number of $(n,2n-3-r,2n-3)$ - ASPs $T$ of order $n$ with $\rho(T)=n+1-p$ and equal to the number of $(0,n,r+2-n)$ - Magog trapezoids $P$ with $\tau(P)=p$.
\end{cor}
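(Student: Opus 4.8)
The plan is to specialize Theorem \ref{RelationASTsMagogPentagons} to $l=0$ and then observe that in this degenerate case the Magog pentagon that appears is literally a Magog trapezoid. First I would check that the hypotheses of Theorem \ref{RelationASTsMagogPentagons} survive the specialization: for $l=0$ and $n-1\le r\le 2n-3$ we have $0\le 0\le n-2<r$, while $l+r=r\le 2n-3<2n-2$ and $r-l=r\ge n-1>n-3$, so the theorem applies verbatim. Plugging $l=0$ into the theorem then produces at once the first two asserted equalities, since the theorem already matches the $(n,0,r)$ - ASPs $T$ with $\rho(T)=p$ both to the reflected $(n,2n-3-r,2n-3)$ - ASPs with $\rho=n+1-p$ and to the $(0,n,r+2-n,r)$ - Magog pentagons $M$ with $\tau(M)=p$. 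Hence the only remaining point is to produce a $\tau$-preserving bijection between $(0,n,r+2-n,r)$ - Magog pentagons and $(0,n,r+2-n)$ - Magog trapezoids.

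Write $k=r+2-n$, so that $r=n+k-2$ and $1\le k\le n-1$. The second step is to pin down exactly which cells of the trapezoid survive the south-east-diagonal truncation. The top $k$ rows of the $n$-row Magog array meet the south-east diagonals (counted from the top right) with indices $1,\dots,n+k-1$, and a direct check of the indexing shows that the extremal diagonal of index $n+k-1$ consists of the single bottom-left cell $a_{k,k}$, whereas every diagonal of smaller index still contains retained cells. Since the pentagon keeps precisely the first $r=n+k-2$ of these diagonals, it is nothing but the full $(0,n,k)$ - Magog trapezoid with the single corner entry $a_{k,k}$ deleted.

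The third step, which is the crux, is to observe that this corner is not free: because $m=0$ the first-row Magog bound gives $a_{1,1}\le m+1=1$, hence $a_{1,1}=1$, and the weakly decreasing left-most column $a_{1,1}\ge a_{2,2}\ge\cdots\ge a_{k,k}\ge 1$ then collapses to all $1$'s, so in particular $a_{k,k}=1$ is forced. Consequently the forgetful map from $(0,n,k)$ - Magog trapezoids to $(0,n,k,r)$ - Magog pentagons that deletes $a_{k,k}$ is a bijection, its inverse reinstating the forced value $a_{k,k}=1$. This bijection preserves $\tau$, since $\tau(P)=n+\sum_{i=1}^k(a_{n-1,i}-a_{n,i})$ only involves the two right-most entries of each of the top $k$ rows, which the deletion leaves untouched; and in the single borderline case $k=n-1$, where $a_{k,k}=a_{n-1,n-1}$ does enter the sum, it enters with its forced value $1$, so the statistics still agree. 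Combining the three steps yields the corollary.

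I expect the main obstacle to lie in the second and third steps: correctly matching the combinatorial ``first $r$ south-east diagonals'' truncation to the deletion of exactly the cell $a_{k,k}$, and then isolating the role of $m=0$ to force that corner to $1$ (with the boundary value $k=n-1$ handled separately so the identity of the two $\tau$-statistics is not broken). The remaining content is purely the bookkeeping already licensed by Theorem \ref{RelationASTsMagogPentagons}.
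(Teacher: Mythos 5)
Your proposal is correct and follows essentially the same route as the paper: specialize Theorem \ref{RelationASTsMagogPentagons} to $l=0$ (checking its hypotheses, as you do) and then identify $(0,n,r+2-n,r)$ - Magog pentagons with $(0,n,r+2-n)$ - Magog trapezoids. If anything, your treatment of that identification is more careful than the paper's one-line claim that the pentagon ``is simply'' the trapezoid because the $r$-th south-east diagonal lies to the left of the $(r+2-n)$-th row: with the diagonal convention fixed by the paper's own $(0,10,4,11)$ example, the truncation does delete exactly the corner cell $a_{k,k}$ with $k=r+2-n$, and your observations that this cell is forced to equal $1$ when $m=0$ and that the borderline case $k=n-1$ leaves $\tau$ unchanged supply precisely the justification the paper glosses over.
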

\begin{proof}
Note that a $(0,n,r+2-n,r)$ - Magog pentagon is simply a $(0,n,r+2-n)$ - Magog trapezoid, since the $r$-th south-east diagonal of the pentagon lies to the left of the $(r+2-n)$-th row. The claim now follows directly from Theorem \ref{RelationASTsMagogPentagons}.
\end{proof}
Combining the above with Zeilberger's famous Lemma 1 we obtain a partial proof of Behrend's conjecture. 
\begin{cor}
Let $n\in\mathbb{N}$ and $n-1\leq r\leq2n-3$. The number of ASMs $M$ of order $n$ with $T_R(M)=2n-3-r$ is equal to the number of $(n,0,r)$ - ASPs and the number of ASMs $M$ of order $n$ with $T_L(M)=2n-3-r$ is equal to the number of $(n,2n-3-r,2n-3)$ - ASPs. 
\end{cor}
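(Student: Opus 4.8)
The plan is to assemble the statement from three ingredients, all available above: the explicit map from ASMs to Gog trapezoids recorded just before this corollary, Zeilberger's equinumeracy of Gog and Magog trapezoids with equal parameters from \cite{ProofOfTheAlternatingSignMatrixConjecture} (his ``Lemma 1''), and Corollary \ref{ASTsAndMagogTrapezoids} relating Magog trapezoids to $(n,0,r)$-ASPs. The entire argument is a chain of equalities of cardinalities, so I only need to track how each parameter transforms; no refinement by $\rho$ or $\tau$ is needed on the ASM side, so I would sum the identities of Corollary \ref{ASTsAndMagogTrapezoids} over all $1\le p\le n$.

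For the south-east statistic I would start from the fact recorded in this section that the number of $n\times n$ ASMs $M$ with $T_R(M)=r'$ equals the number of $(0,n,n-1-r')$-Gog trapezoids. Substituting $r'=2n-3-r$ turns the parameter into $n-1-(2n-3-r)=r+2-n$, so the number of ASMs with $T_R(M)=2n-3-r$ equals the number of $(0,n,r+2-n)$-Gog trapezoids. Zeilberger's Lemma 1 identifies these with the $(0,n,r+2-n)$-Magog trapezoids, and summing Corollary \ref{ASTsAndMagogTrapezoids} over $p$ identifies the latter with the $(n,0,r)$-ASPs. Concatenating the three equalities gives the first assertion; here one checks only that $r+2-n$ is a legitimate trapezoid parameter, i.e.\ $1\le r+2-n\le n$, which holds exactly because $n-1\le r\le 2n-3$.

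For the south-west statistic I would invoke symmetry rather than repeat the computation. Reflecting an ASM across its vertical axis, $(i,j)\mapsto(i,n+1-j)$, is an involution on the set of ASMs of order $n$ (row and column sums are preserved, and reversing an alternating sequence that begins and ends with $+1$ leaves it alternating of the same type), and it interchanges south-east and south-west diagonals while swapping the top-right and top-left corners. Hence it carries the statistic $T_R$ to $T_L$, so the number of ASMs with $T_L(M)=2n-3-r$ equals the number with $T_R(M)=2n-3-r$, which by the first part is the number of $(n,0,r)$-ASPs. Finally, Lemma \ref{LemmaReflectionPropOfASTs} with $l=0$, summed over $p$, shows that the $(n,0,r)$-ASPs are equinumerous with the $(n,2n-3-r,2n-3)$-ASPs, yielding the second assertion.

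The only genuinely delicate point is the parameter bookkeeping: one must apply the value $n-1-r'$ in the ASM-to-Gog correspondence with $r'=2n-3-r$ and verify that it lands on precisely the parameter $r+2-n$ occurring in Corollary \ref{ASTsAndMagogTrapezoids}, and that the admissibility hypotheses of that corollary (notably $3<n$ together with $n-1\le r\le 2n-3$) are met, with the finitely many small-$n$ cases checked directly if necessary. Everything else is a formal composition of equinumeracies already established, which is why this is only a \emph{partial} proof of Behrend's conjecture: it records the total counts and the reflection symmetry, but not a finer joint distribution.
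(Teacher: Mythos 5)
Your proposal is correct and follows essentially the same route as the paper: the same chain ASM $\leftrightarrow$ Gog trapezoid $\leftrightarrow$ Magog trapezoid (Zeilberger's Lemma 1) $\leftrightarrow$ ASP (Corollary \ref{ASTsAndMagogTrapezoids}, summed over $p$), merely traversed in the opposite direction, with the $T_L$ case handled by the same reflection symmetry on both the ASM and ASP sides that the paper invokes. Your explicit parameter bookkeeping and the remark on small $n$ are slightly more careful than the paper's one-line treatment, but the argument is the same.
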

\begin{proof}
By Corollary \ref{ASTsAndMagogTrapezoids} we know that the number of $(n,0,r)$ - ASPs of order $n$ is equal to the number of $(0,n,r+2-n)$ - Magog trapezoids. In \cite{ProofOfTheAlternatingSignMatrixConjecture} Zeilberger proved in his Lemma 1, that the number of such $(0,n,r+2-n)$ - Magog trapezoids is equal to the number of $(0,n,r+2-n)$ - Gog trapezoids. By the above observation we know that these $(0,n,r+2-n)$ - Gog trapezoids are equinumerous with $n\times n$ - ASMs $M$ where $T_R(M)=n-1-(r+2-n)=2n-3-r$. The second statement follows by the horizontal reflection symmetry of both classes of objects.
\end{proof}
See for example below the five $(3,0,2)$ - ASPs and five $3\times 3$ ASMs with $T_R=1$.
\begin{gather*}
    \begin{matrix}1&0&0&0&0\\&1&0&0&\\&&1&&\end{matrix}\quad\textrm{,}\quad
    \begin{matrix}1&0&0&0&0\\&0&0&1&\\&&1&&\end{matrix}\quad\textrm{,}\quad
    \begin{matrix}0&0&0&1&0\\&1&0&0&\\&&1&&\end{matrix}\quad\textrm{,}\quad\\
    \begin{matrix}0&1&0&0&0\\&0&0&1&\\&&1&&\end{matrix}\quad\textrm{,}\quad
    \begin{matrix}0&0&1&0&0\\&1&-1&1&\\&&1&&\end{matrix}\quad\textrm{,}\quad\\
    \begin{pmatrix}
        0&1&0\\
        1&-1&1\\
        0&1&0
    \end{pmatrix}\quad ,\quad  
    \begin{pmatrix}
        0&1&0\\
        1&0&0\\
        0&0&1
    \end{pmatrix}
    \quad ,\quad  
    \begin{pmatrix}
        0&1&0\\
        0&0&1\\
        1&0&0
    \end{pmatrix}
    \quad ,\quad  \\
    \begin{pmatrix}
        1&0&0\\
        0&1&0\\
        0&0&1
    \end{pmatrix}
    \quad ,\quad  
    \begin{pmatrix}
        1&0&0\\
        0&0&1\\
        0&1&0
    \end{pmatrix}
\end{gather*}

Let us introduce another object, so called Gog pentagons.
\begin{defin}
    An $(m,n,k,l)$ - Gog pentagon is an $(m,n,k)$ - Gog trapezoid with the top left corner being equal to the fixed pattern
    \begin{equation*}
    \begin{matrix}
    1&2&...&l\\
    1&...&l-1&\\
    \vdots&\vdots\\
    1
    \end{matrix}
\end{equation*} 
Since this pattern is fixed we may cut it off, this justifies the name pentagon. See for example below the $(0,6,4,2)$ - Gog pentagon.
\begin{equation*}
    \begin{matrix}
        1&2&3&4\\
        1&3&4&5\\
        2&3&4&5\\
        2&4&5\\
        5&6\\
        6
    \end{matrix}\quad\leftrightarrow\quad
    \begin{matrix}
        &&3&4\\
        &3&4&5\\
        2&3&4&5\\
        2&4&5\\
        5&6\\
        6
    \end{matrix}
\end{equation*}
\end{defin}
With this new object we can describe the translation of all three statistics $T_L(M)$ and $T_R(M)$ and the position of the unique $1$ in the top row of an ASM $M$ to the Gog trapezoid side. That is, if $T_L(M)=l$ and $T_R(M)=r$ then $M$ corresponds to an $(0,n,n-1-r,l+1)$ - Gog pentagon. This lets us rephrase the conjecture of Behrend in terms of Gog and Magog pentagons.
\begin{conj}[Behrend, \cite{FischerEnumerationOfAlternatingSignTrianglesUsingAConstantTermApproach}] Let $n\in\mathbb{N}$ and $1\leq l\leq n-2<r\leq 2n-3$ such that $l+r-2n+2<0$ and $\frac{2n+r-l}{3}>n-1$. The number of $(0,n,r+2-n,r-l)$ - Magog pentagons is the same as the number of $(0,n,r+2-n,l+1)$ - Gog pentagons.
\end{conj}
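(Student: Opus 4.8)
The plan is to transport the conjecture into the world of alternating sign objects, where Theorem \ref{RelationASTsMagogPentagons} already controls the Magog side, and then to run the constant-term machinery of this paper on the Gog side. By the ASM--Gog-trapezoid bijection recalled above, a $(0,n,r+2-n,l+1)$ - Gog pentagon encodes exactly an ASM $M$ of order $n$ with $T_L(M)=l$ and $T_R(M)=2n-3-r$; on the other hand Theorem \ref{RelationASTsMagogPentagons} together with Lemma \ref{LemmaReflectionPropOfASTs} identifies the number of $(0,n,r+2-n,r-l)$ - Magog pentagons with the number of $(n,l,r)$ - ASPs. Hence the conjecture is equivalent to the identity
$$\#\{(n,l,r)\text{ - ASPs}\}=\#\{M\text{ an ASM with }T_L(M)=l,\ T_R(M)=2n-3-r\},$$
which is Behrend's original formulation. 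I would in fact prove the refinement in which the AST side is graded by $\rho$ and the ASM side by the position $p$ of the unique $1$ in its top row, a grading consistent with Fischer's identification of these two statistics in the unrestricted case $l=0$, $r=2n-3$ (\cite{FischerEnumerationOfAlternatingSignTrianglesUsingAConstantTermApproach}).

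First I would produce a constant-term expression for the refined ASM count, mirroring Corollary \ref{UnevaluatedCTOfLRRestrictedASTSCor}. The point is that $T_L$ and $T_R$ translate, through the prefix-sum (``in-between'') matrix, into the two fixed triangular corner patterns of the Gog trapezoid displayed above; in monotone-triangle language these freeze a block of the uppermost entries while the bottom row stays $(1,2,\dots,n)$. Feeding this frozen data into Fischer's operator and constant-term formulas for monotone triangles with prescribed top and bottom rows (\cite{RefinedEnumerationsOfAlternatingSignMatricesMonotoneDMTrapezoidsWithPrescribedTopAndBottomRow,TheOperatorFormulaForMonotoneTriangles}) should express the number of such ASMs, graded by $p$, as a constant term in variables $X_1,\dots,X_{n-1},t$ of a product-and-sum of the same flavour as \eqref{ConstantTermNumberOfJ1...JN-1ASTS}.

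The second and decisive step is to collapse this Gog-side constant term to the \emph{same} closed form as the ASP side, namely the sum of determinants in \eqref{SumOfDeterminantsLRRestrictedASTFormula}. On the Magog/ASP side the engine was the symmetrizer trick followed by the bounded Littlewood identity of Lemma \ref{BoundedLittlewoodIdentity}; I would try to show that, after the analogous antisymmetrization, the Gog-side expression is governed by the same (or a companion) identity, so that both constant terms reduce to \eqref{SumOfDeterminantsLRRestrictedASTFormula}. Concretely this asks that the two antisymmetrized generating functions --- the one carrying the AST signature $\prod_{i<j}(1+X_j+X_iX_j)$ and the one arising from monotone triangles with the frozen corners --- yield identical determinants once the corner constraints are imposed.

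The main obstacle is exactly this last matching: it is the pentagon-level incarnation of the Gog--Magog problem, which even for trapezoids has resisted every bijective attack and in Zeilberger's proof needed the detour through TSSCPPs and Andrews' theorem. In our notation the extreme cases $l=0$ and $r=2n-3$ degenerate the Magog pentagon to a Magog \emph{trapezoid}, where Corollary \ref{ASTsAndMagogTrapezoids} and Zeilberger's Lemma~1 already close the argument; the genuine difficulty appears when both corner patterns are nontrivial, since no bounded-Littlewood-type identity bridging the two antisymmetrizations is presently known. Producing such an identity --- or, failing that, a direct equality between the Gog and the Magog non-intersecting-lattice-path determinants --- is the crux that keeps the full conjecture open, and is where I would expect essentially all of the effort to go.
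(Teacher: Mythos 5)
The statement you are addressing is not a theorem of the paper but an open conjecture: the paper manufactures this exact formulation by applying Theorem \ref{RelationASTsMagogPentagons} and the ASM--Gog-pentagon dictionary to Behrend's original conjecture, and it proves nothing beyond the partial case of Corollary \ref{ASTsAndMagogTrapezoids}, where the Magog pentagon degenerates to a trapezoid and Zeilberger's Lemma 1 closes the argument. Your first step --- identifying $(0,n,r+2-n,l+1)$ - Gog pentagons with ASMs $M$ satisfying $T_L(M)=l$ and $T_R(M)=2n-3-r$, and $(0,n,r+2-n,r-l)$ - Magog pentagons with $(n,l,r)$ - ASPs via Theorem \ref{RelationASTsMagogPentagons} --- is correct, but it exactly undoes the paper's own reformulation and lands you back at Behrend's original statement. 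That is a change of language, not progress: the paper's contribution is precisely that these two formulations are equivalent, so translating one into the other cannot by itself yield a proof.

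The genuine gap is the one you name yourself: the ``second and decisive step,'' matching the Gog-side constant term (which one would extract from Fischer's monotone-triangle operator formula with two frozen corner patterns) against the Magog-side determinant (\ref{SumOfDeterminantsLRRestrictedASTFormula}), is exactly the open Gog--Magog pentagon problem. No bounded-Littlewood-type identity playing the role of Lemma \ref{BoundedLittlewoodIdentity} is known on the Gog side, so the two antisymmetrized generating functions cannot currently be compared; even in the trapezoid case the only known bridge is Zeilberger's non-bijective detour through TSSCPPs and Andrews' theorem, which has no pentagon analogue. Your proposed refinement by $\rho$ versus the position of the top-row $1$ adds a further unproven layer: Fischer's identification of these two statistics is established only in the unrestricted case $l=0$, $r=2n-3$, and at pentagon level that refinement is itself part of Behrend's conjecture (which, as the paper notes, includes statistics on both sides). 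In short, what you have written is an accurate map of where the difficulty lies together with a correct reduction to the known equivalent formulation, but it is not a proof, and no proof exists in the paper; the conjecture remains open.
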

See for example below the set of nine $(0,4,2,2)$ - Magog pentagons and $(0,4,2,3)$ - Gog pentagons for $n=4$, $r=4$, $l=2$.
\begin{gather*}
    \begin{matrix}
        1&1
    \end{matrix}\quad \textrm{,}\quad 
    \begin{matrix}
        1&2
    \end{matrix}\quad \textrm{,}\quad 
    \begin{matrix}
        1&3
    \end{matrix}\quad \textrm{,}\quad 
    \begin{matrix}
        1&4
    \end{matrix}\quad \textrm{,}\quad 
    \begin{matrix}
        2&2
    \end{matrix}\quad \textrm{,}\quad 
    \begin{matrix}
        2&3
    \end{matrix}\quad \textrm{,}\quad \\
    \begin{matrix}
        2&4
    \end{matrix}\quad \textrm{,}\quad 
    \begin{matrix}
        3&3
    \end{matrix}\quad \textrm{,}\quad 
    \begin{matrix}
        3&4
    \end{matrix} \\
     \begin{matrix}
        &2\\
        1
    \end{matrix}\quad \textrm{,}\quad 
    \begin{matrix}
        &3\\
        1
    \end{matrix}\quad \textrm{,}\quad 
    \begin{matrix}
        &4\\
        1
    \end{matrix}\quad \textrm{,}\quad 
    \begin{matrix}
        &2\\
        2
    \end{matrix}\quad \textrm{,}\quad 
    \begin{matrix}
        &3\\
        2
    \end{matrix}\quad \textrm{,}\quad 
    \begin{matrix}
        &4\\
        2
    \end{matrix}\quad \textrm{,}\quad \\ 
    \begin{matrix}
        &3\\
        3
    \end{matrix}\quad \textrm{,}\quad 
    \begin{matrix}
        &4\\
        3
    \end{matrix}\quad \textrm{,}\quad 
    \begin{matrix}
        &4\\
        4
    \end{matrix}
\end{gather*}

Note that the conjecture can not be extended to more general paramaters, i.e. we do not have that the set of $(0,n,k,l)$ - Gog pentagons is equinumerous to the set of $(0,n,k,k-l-1+n)$ - Magog pentagons for $1\leq l\leq n$ and $1\leq k\leq n$. For $n=3$, we see that for $k=2$ and $l=3$, there exists a unique $(0,3,2,3)$ - Gog pentagon given by
$$\begin{matrix}
    1&2&3\\1&2\\1
\end{matrix}\quad ,$$
whereas there exist three $(0,3,2,1)$ - Magog pentagons given by
$$\begin{matrix}
    1&1&1\\1&1\\1
\end{matrix}\quad , \quad
\begin{matrix}
    1&1&2\\1&1\\1
\end{matrix}\quad ,\quad
\begin{matrix}
    1&1&3\\1&1\\1
\end{matrix}\quad .$$

Also note that the other statistics mentioned by Behrend can also be included in the above discussion, giving a full reformulation of his conjecture.

There is another conjecture by Biane and Cheballah \cite{GogAndGogampentagons}, relating Gog pentagons to GOGAm pentagons, objects introduced in \cite{GogAndGogampentagons}. These GOGAm pentagons are obtained by applying the Schützenberger-involution to $(0,n,n)$ - Magog trapezoids and then cutting off certain parts of the so obtained triangle. They conjectured that there are equally many Gog pentagons of a fixed shape as there are GOGAm pentagons of that shape. This is different in our conjectured correspondence as the Magog pentagons have a different shape than the Gog pentagons in general. Nevertheless combining these two, we can state another conjecture concerning our new objects.  
\begin{conj}
    Let $n\in\mathbb{N}$ and $1\leq l\leq n-2<r\leq 2n-3$ such that $l+r-2n+2<0$ and $\frac{2n+r-l}{3}>n-1$. The number of $(0,n,r+2-n,r-l)$ - Magog pentagons is the same as the number of $(0,n,r+2-n,l+1)$ - GOGAm pentagons.
\end{conj}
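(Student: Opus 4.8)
The plan is to deduce the statement from the two equinumeracy conjectures recorded immediately above it. The reformulation of Behrend's conjecture asserts that, for the admissible range $1\le l\le n-2<r\le 2n-3$ with $l+r-2n+2<0$ and $\frac{2n+r-l}{3}>n-1$, the number of $(0,n,r+2-n,r-l)$ - Magog pentagons equals the number of $(0,n,r+2-n,l+1)$ - Gog pentagons; and the conjecture of Biane and Cheballah asserts that Gog pentagons and GOGAm pentagons of the same shape are equinumerous, so in particular the $(0,n,r+2-n,l+1)$ - Gog pentagons are equinumerous with the $(0,n,r+2-n,l+1)$ - GOGAm pentagons. Chaining these two correspondences across the common Gog-pentagon shape $(0,n,r+2-n,l+1)$ yields exactly the asserted equality. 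Thus, conditionally, the only thing to verify is that the index bookkeeping of the two conjectures agrees on that common shape, which it does by construction; there is nothing further to do.

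For an \emph{unconditional} proof one should instead exploit the definition of GOGAm pentagons directly. Recall that a GOGAm pentagon is produced from a $(0,n,n)$ - Magog trapezoid by applying the Schützenberger involution and then truncating a fixed region, whereas the $(0,n,r+2-n,r-l)$ - Magog pentagons arise from the same Magog trapezoids purely by truncation, cutting off the bottom rows and the leftmost south-east diagonals. The natural strategy is therefore to realise both families inside the single space of $(0,n,n)$ - Magog trapezoids and to show that the Schützenberger involution carries the truncation region defining Magog pentagons onto the truncation region defining GOGAm pentagons. Concretely, I would first translate both shape conditions into forced-entry conditions on the underlying Gelfand-Tsetlin patterns, exactly as in the bijection used to prove Theorem \ref{RelationASTsMagogPentagons}, where the pentagon cuts became the conditions that certain diagonals and columns are forced to equal $1$, and then track these forced entries through the involution.

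The hard part will be controlling the Schützenberger involution on the boundary. The involution is combinatorially intricate on Gelfand-Tsetlin patterns, and although it is a bijection on full Magog triangles it does not obviously respect the linear diagonal-and-row cut-off conditions that distinguish pentagons from trapezoids: a region forced to be constant before the involution need not be forced to be constant afterwards, and matching the two pentagon shapes amounts to proving precisely such a correspondence. This is the same obstruction that leaves the Biane--Cheballah conjecture open in the refined pentagon setting, so a complete bijective argument here would effectively resolve that conjecture as well. A less bijective alternative would be to enumerate GOGAm pentagons independently and compare the result with the Pfaffian generating function of Theorem \ref{PfaffianCor}; but since no closed form for GOGAm pentagons is presently available, producing one is itself the crux of the difficulty.
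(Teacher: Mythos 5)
Your derivation matches the paper exactly: the statement is a \emph{conjecture} there, formulated precisely by chaining the reformulated Behrend conjecture (Magog pentagons of shape $(0,n,r+2-n,r-l)$ versus Gog pentagons of shape $(0,n,r+2-n,l+1)$) with the Biane--Cheballah conjecture (Gog versus GOGAm pentagons of that same shape), and the paper gives no unconditional proof. Your observation that the statement is therefore only conditional on two open conjectures, and your assessment of why an unconditional argument via the Schützenberger involution is currently out of reach, are both accurate and consistent with the paper's own discussion.
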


The following observation regarding Theorem \ref{PfaffianCor} is also worth noting.
We know that in the unrestricted case the number of ASTs of order $n$ is given by a simple product formula as mentioned in the introduction. When considering $(n,l,r)$ - ASPs this does not seem to generalise when considering arbitrary $r$ and $l$, so the Pfaffian obtained in Theorem \ref{PfaffianCor} does not seem to have a simple product factorization. See that the tables below, giving the number of $(n,l,r)$ - ASPs, contain too large prime factors to yield a simple product formula. Checking the OEIS-database it seems that none of the sequences except for the case $l=0$ and $r=n-1$ as well as its symmetric case $l=n-2$ and $r=2n-3$ is enumerated by a known sequence. In the two symmetric cases mentioned we obtain the Catalan numbers. But this is easy to see, since $(0,n,1)$ - Magog trapezoids are simply sequences $a_1\leq ...\leq a_n$ such that $a_i\leq i$, which are known to be enumerated by the Catalan numbers (see \cite{EnumerativeCombinatoricsVol2}).
\begin{center}
    \begin{tabular}{||c||c|c|c|}
    \hline
    $n=4$&$l=0$&$l=1$&$l=2$\\
    \hline \hline
    $r=3$&$2\cdot 7$ &$3^2$ &$0$   \\
    \hline
    $r=4$&$5\cdot 7$ &$2^2\cdot 7$ &$3^2$ \\
    \hline
    $r=5$&$2\cdot 3\cdot 7$ &$5\cdot 73$ &$2\cdot 7$  \\
    \hline
    \end{tabular}
\end{center}
\begin{center}
    \begin{tabular}{||c||c|c|c|c|}
    \hline
    $n=5$&$l=0$&$l=1$&$l=2$&$l=3$\\
    \hline \hline
    $r=4$&$2\cdot 3\cdot 7$ &$2^2\cdot 7$ &$0$&$0$   \\
    \hline
    $r=5$&$3\cdot 73$ &$2^3\cdot 23$ &$2^3\cdot 3^2$ &$0$ \\
    \hline
    $r=6$&$3^2\cdot 43$ &$3\cdot 5\cdot 23$ &$2^3\cdot 23$ &$2^2\cdot 7$ \\
    \hline
    $r=7$&$3\cdot 11\cdot 13$ &$3^2\cdot 43$ &$3\cdot 73$ &$2\cdot 3\cdot 7$ \\
    \hline
    \end{tabular}
\end{center}
\begin{center}
    \begin{tabular}{||c||c|c|c|c|c|}
    \hline
    $n=6$&$l=0$&$l=1$&$l=2$&$l=3$&$l=4$\\
    \hline \hline
    $r=5$&$2^2\cdot 3\cdot 11$ &$2\cdot 3^2\cdot 5$ &$0$&$0$&$0$  \\
    \hline
    $r=6$&$2\cdot 797$ &$5^3\cdot 11$ &$3\cdot 199$ &$0$ &$0$\\
    \hline
    $r=7$&$2\cdot 11\cdot 13\cdot 17$ &$5^2\cdot 179$ &$2\cdot 5^3\cdot 11$ &$3\cdot 199$ &$0$\\
    \hline
    $r=8$&$7^2\cdot 11\cdot 13$ &$2\cdot 11\cdot 13\cdot 23$ &$5^2\cdot 179$ &$5^6\cdot 11$& $2\cdot 3^2\cdot 5$\\
    \hline
    $r=9$&$2^2\cdot 11\cdot 13^2$ &$7^2\cdot 11\cdot 13$ &$2\cdot 11\cdot 13\cdot 17$ &$2\cdot 797$&$2^2\cdot 3\cdot 11$ \\
    \hline
    \end{tabular}
\end{center}
\begin{center}
    \begin{tabular}{||c||c|c|c|c|c|c|}
    \hline
    $n=7$&$l=0$&$l=1$&$l=2$&$l=3$&$l=4$&$l=5$\\
    \hline \hline
    $r=6$&$3\cdot 11\cdot 13$ &$3^3\cdot 11$ &$0$&$0$&$0$&$0$  \\
    \hline
    $r=7$&$5\cdot 13\cdot 199$ &$11\cdot 1031$ &$3^2\cdot 11\cdot 53$ &$0$ &$0$&$0$\\
    \hline
    $r=8$&$5\cdot 11\cdot 13\cdot 107$ &$3\cdot 11\cdot 13\cdot 167$ &$2\cdot 11^2\cdot 197$ &$11\cdot 31\cdot 37$ &$0$&$0$\\
    \hline
    $r=9$&$2^3\cdot 3\cdot 13^2\cdot 41$ &$13\cdot 12253$ &$13^2\cdot 709$ &$2\cdot 11^2\cdot 197$& $3^2\cdot 11\cdot 53$&$0$\\
    \hline
    $r=10$&$2^5\cdot 3\cdot 13^3$ &$2^2\cdot 7\cdot 13^2\cdot 43$ &$13\cdot 12253$ &$3\cdot 11\cdot 13\cdot 167$&$11\cdot 1031$&$3^3\cdot 11$ \\
    \hline
    $r=11$&$2^2\cdot 13^2\cdot 17\cdot 19$ &$2^5\cdot 3\cdot 13^3$ &$2^3\cdot 3\cdot 13^2\cdot 41$ &$5\cdot 11\cdot 13\cdot 107$&$5\cdot 13\cdot 199$&$3\cdot 11\cdot 13$ \\
    \hline
    \end{tabular}
\end{center}

\newpage
\bibliographystyle{alpha}
\bibliography{main}
\nocite{*}

\end{document}